\patchcmd{\thebibliography}{\leftmargin\labelwidth}{\leftmargin\labelwidth\addtolength\itemsep{-0.1\baselineskip}}{}{}
\author{Michael Harrison\thanks{Institute for Advanced Study, Princeton, NJ 08540, USA\@.  Supported by the National Science Foundation through Award No. 1926686.} \and R. Amzi Jeffs\thanks{Department of Mathematical Sciences, Carnegie Mellon University, Pittsburgh, PA 15213, USA\@. Supported by the National Science Foundation through Award No. 2103206.}}
\title{Quantitative upper bounds on the\\Gromov--Hausdorff distance between spheres}
\date{August 2024}
\newtheorem{theorem}{Theorem}[section]
\newtheorem{lemma}[theorem]{Lemma}
\newtheorem{corollary}[theorem]{Corollary}
\newtheorem{proposition}[theorem]{Proposition}
\newtheorem{definition}[theorem]{Definition}
\theoremstyle{remark}
\newtheorem{example}[theorem]{Example}
\newtheorem{remark}[theorem]{Remark}
\newtheorem{question}[theorem]{Question}
\newcommand*{\eqdef}{\stackrel{\mbox{\normalfont\tiny def}}{=}}  
\newcommand*{\R}{\mathbb{R}}                                     
\newcommand*{\Z}{\mathbb{Z}}                                     
\newcommand*{\RP}{\mathbb{R}\mathrm{P}}                                     
\newcommand*{\Pc}{\mathcal G}                                     
\newcommand*{\Q}{\mathcal{F}}                                     
\DeclareMathOperator{\diam}{diam}                                 
\DeclareMathOperator{\vdiam}{vdiam}                                 
\DeclareMathOperator{\dis}{dis}                               
\DeclareMathOperator{\sep}{sep}                               
\newcommand{\gh}{\mathrm{GH}}                               
\newcommand{\HH}{\mathrm{H}}
\newcommand{\Rk}{\mathcal R_k}
\newcommand{\cdott}{\hspace{0.1em}}
\begin{document}
\maketitle
\begin{abstract}
The Gromov--Hausdorff distance between two metric spaces measures how far the spaces are from being isometric. 
It has played an important and longstanding role in geometry and shape comparison.
More recently, it has been discovered that the Gromov--Hausdorff distance between unit spheres equipped with the geodesic metric has important connections to Borsuk--Ulam theorems and Vietoris--Rips complexes. 

We develop a discrete framework for obtaining upper bounds on  the Gromov--Hausdorff distance between spheres, and provide the first quantitative bounds that apply to spheres of all possible pairs of dimensions. 
As a special case, we determine the exact Gromov--Hausdorff distance between a circle and any higher-dimensional sphere, and determine the precise asymptotic behavior of the distance from the 2-sphere to the $k$-sphere up to constants.
\end{abstract}
\section{Introduction}\label{sec:intro}
The Gromov--Hausdorff distance $d_\gh(X,Y)$ between two compact metric spaces $X$ and $Y$ captures how closely the spaces can be aligned with one another---in particular, $d_\gh(X,Y) = 0$ if and only if $X$ and $Y$ are isometric \cite{BBI2001, petersen2006}.
The Gromov--Hausdorff distance was first defined in 1975 by Edwards~\cite{Edwards75}, and then rediscovered in 1981 by Gromov~\cite{Gromov81}, and for decades it has played an important role in metric geometry (see e.g. \cite{cheeger1997, colding1996}).
In more recent years, it has proved to be a natural tool in the context of data analysis and shape matching \cite{BBK2008,memoli2007,MS05, CM2010}.

Precisely determining the Gromov--Hausdorff distance between given metric spaces is computationally challenging \cite{schmiedl2017, memoli2012, AFNSW2018}. 
In particular, computing the Gromov--Hausdorff distance between finite metric spaces is NP-hard \cite{memoli2007}.
Exact values of the Gromov--Hausdorff distance in special cases have been obtained  only very recently---for example, between certain discrete metric spaces~\cite{IT19}, between an interval and a circle~\cite{JT22}, between vertex sets of regular polygons~\cite{Talipov22}, \cite[Appendix C]{LMS22}.

The first nontrivial bounds on the Gromov--Hausdorff distance between unit spheres equipped with the geodesic metric were obtained in recent work of Lim, M\'emoli, and Smith~\cite{LMS22}, who provided lower bounds using novel connections to Borsuk--Ulam type theorems, and upper bounds via explicit constructive techniques.
They computed the exact Gromov--Hausdorff distance between $S^n$ and $S^k$ for $n=0$, $k=\infty$, and $(n,k)\in \{(1,2), (1,3), (2,3)\}$.
They also provided quantitative upper bounds in the case $k = n+1$, and for general $n$ and $k$ they showed that $d_\gh(S^n, S^k)$ is strictly less than the trivial upper bound of $\tfrac{\pi}{2}$, which follows from the fact that $S^n$ and $S^k$ have diameter $\pi$.

The aforementioned lower bounds were subsequently improved in a large polymath project~\cite{dghpolymath}, which included the present authors.
This project built on insights of Adams, Bush, and Frick~\cite{ABF21} and Lim, M\'emoli, and Smith~\cite{LMS22} to demonstrate quantitative relationships between the Gromov--Hausdorff distance $d_\gh(S^n, S^k)$, Borsuk--Ulam type theorems concerning functions $S^k\to S^n$, and the topology of Vietoris--Rips complexes over $S^n$ and $S^k$.
These quantitative connections further motivate the question of determining the exact value of the Gromov--Hausdorff distance between spheres. 
Explicit upper bounds on $d_\gh(S^n, S^k)$ for general $n$ and $k$ have remained elusive, and our work addresses this scarcity by computing the exact value of $d_\gh(S^1,S^k)$ for all $k$ and giving quantitative upper bounds on $d_\gh(S^n,S^k)$ for all $n$ and $k$.

\paragraph{Quantitative results.}

For reasons of notational convenience, we will work with the quantity $2\cdott d_\gh(S^n, S^k)$.
The statements of \Cref{thm:1tok,thm:ntok}  contain two equivalent formulations: first an upper bound on $2\cdott d_{\gh}(S^n,S^k)$, and then a lower bound on the gap between $2\cdott d_\gh(S^n,S^k)$ and the elementary upper bound of $\pi$.
When $n=1$, our methods are strong enough to meet the existing lower bounds, and we obtain the following theorem.

\begin{restatable}{theorem}{1tok}\label{thm:1tok}
Let $\ell\ge 1$ be any integer. 
Then
\begin{align*}
2\cdott d_\gh(S^1, S^{2\ell}) = \hspace{1.7em} 2\cdott d_\gh(S^1, S^{2\ell+1}) &= \frac{2\pi \ell}{2\ell+1}\\
\text{or equivalently}\quad\quad  \pi - 2\cdott d_\gh(S^1, S^{2\ell}) = \pi -  2\cdott d_\gh(S^1, S^{2\ell+1}) &= \frac{\pi}{2\ell+1}.
\end{align*}
\end{restatable}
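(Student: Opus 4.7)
The lower bound $2\cdott d_\gh(S^1, S^k) \ge \frac{2\pi \ell}{2\ell+1}$ for $k \in \{2\ell,\,2\ell+1\}$ is already established via the Borsuk--Ulam-type arguments of Lim--M\'emoli--Smith~\cite{LMS22} and the polymath project~\cite{dghpolymath}, so the content of the theorem is the matching upper bound $2\cdott d_\gh(S^1,S^k) \le \frac{2\pi\ell}{2\ell+1}$ for both values of $k$.

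My plan is to construct an explicit correspondence using the discrete framework developed earlier in the paper. On the $S^1$ side I take the $2\ell+1$ equally spaced points $x_i = e^{2\pi i/(2\ell+1)}$ and the induced partition of $S^1$ into half-open arcs $A_0,\ldots,A_{2\ell}$ of length $\frac{2\pi}{2\ell+1}$; note that $d_{S^1}(x_i,x_j) = \frac{2\pi m}{2\ell+1}$ with $m=\min(|i-j|,\,2\ell+1-|i-j|)$. On the $S^k$ side I choose $2\ell+1$ points $p_0,\ldots,p_{2\ell}$ with a $\Z/(2\ell+1)$-symmetric arrangement (for instance, vertices of a regular simplex in an equatorial $S^{2\ell}\subseteq S^k$, or $2\ell+1$ equally spaced points on a great circle) together with a cover $\{B_i\}$ of $S^k$ satisfying $p_i\in B_i$, and form the correspondence $R = \bigcup_i A_i\times B_i \subseteq S^1\times S^k$.

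The distortion $\dis(R)$ decomposes into \emph{intra-block} contributions, bounded by $\max_i\max(\diam A_i,\diam B_i)$, and \emph{inter-block} contributions of the form $|d_{S^1}(x,x')-d_{S^k}(y,y')|$ for points in distinct blocks, which are controlled by $\bigl|\frac{2\pi m}{2\ell+1}-d_{S^k}(p_i,p_j)\bigr|$ together with the block diameters. The goal is to choose $\{p_i\}$ and $\{B_i\}$ so that both contributions are simultaneously bounded by $\frac{2\pi\ell}{2\ell+1}$, and then to translate this discrete control into the required bound on $\dis(R)$ via the framework's extension lemma.

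The main obstacle is that a naive Voronoi partition of $S^k$ around a highly symmetric configuration yields regions whose diameters are too large when $k$ is small; for example, the three Voronoi lunes around an equilateral triangle on $S^2$ have diameter $\pi$, already exceeding the target $\frac{2\pi}{3}$. The resolution, in keeping with the paper's discrete framework, is to work with overlapping covers rather than strict partitions, whose constituent pieces have carefully controlled diameter, and to coordinate the positions of the $p_i$ with the arcs $A_i$ so that the inter-block mismatches are tight. I anticipate that a single construction, suitably adapted, handles both $k=2\ell$ and $k=2\ell+1$; in particular, the odd-dimensional case may exploit the free $\Z/(2\ell+1)$-action on $S^{2\ell+1}\subseteq\mathbb{C}^{\ell+1}$ by simultaneous rotation of all complex coordinates, mirroring the rotational symmetry on $S^1$ and so avoiding the extra distortion that would arise from a direct extension along the inclusion $S^{2\ell}\hookrightarrow S^{2\ell+1}$.
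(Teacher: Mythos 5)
Your plan has a fatal topological obstruction at its core. You propose a correspondence $R=\bigcup_{i=0}^{2\ell} A_i\times B_i$ built from only $2\ell+1$ blocks, where $\{B_i\}$ covers $S^k$ for $k\in\{2\ell,2\ell+1\}$. For the distortion to be below $\pi$ you need every $B_i$ to have diameter strictly less than $\pi$ (if some $B_i$ has $\sup$-diameter $\pi$, take $x=x'\in A_i$ and $y,y'\in B_i$ nearly antipodal to get distortion arbitrarily close to $\pi$, so $\dis(R)=\pi$). But by the Lusternik--Schnirelmann--Borsuk theorem, $S^k$ cannot be covered by $k+1$ or fewer sets each of diameter less than $\pi$: passing to closures preserves diameters and produces a closed cover in which no set contains antipodes, a contradiction. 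Since $2\ell+1=k+1$ when $k=2\ell$ and $2\ell+1=k$ when $k=2\ell+1$, \emph{any} choice of $2\ell+1$ sets $B_i$ covering $S^k$ --- overlapping or not, closed or not --- has a member of diameter $\pi$, so your correspondence has distortion exactly $\pi$, the trivial bound. The ``overlapping covers'' resolution you anticipate cannot repair this, because the obstruction is insensitive to overlaps; and the appeal to the free $\Z/(2\ell+1)$-action on $S^{2\ell+1}$ is not accompanied by any construction that evades it.

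The paper's route is structurally different on both counts. For even $k=2\ell$ it uses $2(k+1)=4\ell+2$ points on each side (evenly spaced antipodal points on $S^1$, the cross-polytope vertices $\pm e_i$ on $S^k$) and, crucially, not product blocks but the star-shaped Voronoi-collapse correspondence $R_{P,Q}$ of \Cref{thm:induced-correspondence}, whose distortion is controlled by $\vdiam$ and $\pi-\sep$ of the two sets; \Cref{lem:crosspolytope} then gives $\dis(R_{P,Q})\le\frac{k\pi}{k+1}=\frac{2\pi\ell}{2\ell+1}$ (\Cref{thm:1tok-easy}). For odd $k=2\ell+1$ the paper proves (\Cref{rem:kodd}) that no correspondence of this discrete collapse type can even improve on $\frac{(2\ell+1)\pi}{2\ell+2}$, so a matching upper bound requires a genuinely new construction: the correspondence $\Rk$, which projects each cross-polytope Voronoi cell of $S^{2\ell+1}$ onto an arc of a piecewise-geodesic embedded circle $\gamma\colon S^1\to S^{2\ell+1}$ traversing cell diameters, with the bound $\dis(\Rk)=\frac{(k-1)\pi}{k}=\frac{2\pi\ell}{2\ell+1}$ established by the distance-decreasing lemma, symmetry reductions, and the case analysis of \Cref{thm:Rdist}. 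Your proposal contains neither the correct block count and collapse structure for the even case nor any mechanism capable of handling the odd case, so as written it cannot yield the theorem.
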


The lower bounds for $2\cdott d_\gh(S^1, S^{2\ell})$ and $2\cdott d_\gh(S^1, S^{2\ell+1})$ were achieved in~\cite{dghpolymath},
where it is shown that, for all $n$ and $k$, strong bounds for $2\cdott d_\gh(S^n, S^k)$ can be obtained in terms of the equivariant topology of Vietoris--Rips complexes. \Cref{thm:1tok} shows that these bounds are sharp in infinitely many cases, answering half of \cite[Question 8.1]{dghpolymath} in the affirmative.
It could be the case that these topological lower bounds for $2\cdott d_\gh(S^n,S^k)$ are sharp for \emph{every} $n$ and $k$, and \Cref{thm:1tok} provides the strongest evidence to date in this direction.

The upper bound $2\cdott d_\gh(S^1, S^{2\ell}) \le \frac{2\pi\ell}{2\ell+1}$ can be obtained relatively quickly with our techniques---in particular, it is a special case of \Cref{thm:ntok} below, with the choice $n=1$ and $k = 2\ell$.
The Gromov--Hausdorff distance between a circle and an odd-dimensional sphere will require a much more careful analysis, which we carry out in \Cref{sec:1tok-kodd}.

For general $n$ and $k$, we prove the following result.
\begin{restatable}{theorem}{ntok}\label{thm:ntok}
For every $1 \le n < k <\infty$, we have 
\[
2\cdott d_\gh(S^n, S^k) \le \frac{\pi k}{k+1}\quad\quad \text{or equivalently} \quad \quad \pi - 2\cdott d_\gh(S^n, S^k) \ge \frac{\pi}{k+1}.
\]
\end{restatable}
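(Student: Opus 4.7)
The plan is to use the correspondence characterization of the Gromov--Hausdorff distance,
\[
2\cdott d_\gh(X,Y) = \inf_{R} \dis(R),
\]
where $R$ ranges over correspondences $R \subseteq X \times Y$ and $\dis(R) = \sup_{(x,y),(x',y') \in R} |d_X(x,x') - d_Y(y,y')|$. The goal is to construct a correspondence $R \subseteq S^n \times S^k$ of distortion at most $\tfrac{\pi k}{k+1}$.

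The construction would be organized around two discrete configurations. In $S^k$, I would take the vertices $w_0, \dots, w_k$ of a regular $(k+1)$-simplex inscribed in $S^k$, which are at pairwise geodesic distance $\arccos(-1/k)$; in $S^n$, I would take $k+1$ points $t_0, \dots, t_k$ equally spaced along a great circle, with pairwise geodesic distances in $\{\tfrac{2\pi j}{k+1} : 1 \le j \le \lfloor (k+1)/2 \rfloor\}$. Under the matching $t_i \leftrightarrow w_i$, each discrepancy $|d_{S^n}(t_i,t_j) - d_{S^k}(w_i,w_j)|$ is at most $\tfrac{\pi k}{k+1}$: both configurations have pairs at small distance ($\tfrac{2\pi}{k+1}$ in $S^n$, and $\arccos(-1/k) \le \tfrac{\pi k}{k+1}$ in $S^k$), and the largest pairwise distance in $S^n$ is either $\tfrac{\pi k}{k+1}$ (when $k$ is even) or $\pi$ (when $k$ is odd), matched against $\arccos(-1/k)$ in $S^k$, where in the odd case the gap $\arccos(1/k) \le \tfrac{\pi}{2}$ remains well within the target.

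The heart of the argument is to extend this $(k+1)$-point matching to a correspondence on the full spheres, using the paper's discrete framework. The challenge is that the Voronoi cells of the $w_i$ in $S^k$ have diameters approaching $\pi$ (each stretches from a vertex nearly to its antipode), and the chosen $k+1$ points of $S^n$ lie on a great circle whose covering radius is $\tfrac{\pi}{2}$ for $n \ge 2$, so any crude Voronoi-cell extension would fail to meet the target bound. I would expect the framework to furnish a multi-valued extension in which each point $y \in S^k$ is paired with a controlled set of points $x \in S^n$ whose geodesic distances to the $t_i$ track the distances from $y$ to the $w_i$: in particular, if $y$ lies near the antipode of $w_i$ inside the Voronoi cell of $w_i$, then $y$ should be paired with $x \in S^n$ that are correspondingly far from $t_i$, cancelling the large cell diameter against a large distance in $S^n$.

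The main obstacle is showing that the distortion contribution from the extension is dominated by the distortion of the discrete matching. The argument must simultaneously handle pairs $(x_1, y_1), (x_2, y_2) \in R$ with $y_1, y_2$ in the same simplex cell (where the $y$-distance can be as large as $\pi$ but the $x$-distance must also be large), with $y_1, y_2$ in adjacent cells (where $d_{S^k}(w_i, w_j) = \arccos(-1/k)$ constrains the matching), and with $y_1, y_2$ in far-apart cells. Designing the pairing so that all three regimes stay below $\tfrac{\pi k}{k+1}$ is the principal technical step and likely occupies the bulk of the proof.
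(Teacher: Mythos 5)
There is a genuine gap: you never actually construct the correspondence. Your proposal produces only a matching of $k+1$ discrete points ($t_i \leftrightarrow w_i$) and then defers the real work to an unspecified ``multi-valued extension'' that is supposed to cancel the large Voronoi-cell diameters against large distances in $S^n$. That extension is the entire content of the theorem, and the paper's discrete framework (\Cref{thm:induced-correspondence}) cannot furnish it for your configurations: it requires \emph{antipodal} sets of equal size and bounds the distortion by the Voronoi diameters and separations of those sets. Your $S^k$ configuration is fatal on this score: $k+1$ unit vectors at pairwise inner product $-1/k$ span only a $k$-dimensional linear subspace of $\R^{k+1}$, so they lie on a great $S^{k-1}$, and the two poles orthogonal to that subsphere are equidistant (at distance $\pi/2$) from every $w_i$; hence this antipodal pair lies in every Voronoi cell and every cell has diameter exactly $\pi$. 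The same degeneracy occurs on the $S^n$ side for $n\ge 2$, since points confined to a great circle leave an orthogonal $(n-2)$-sphere equidistant from all of them. This is precisely the obstruction the paper isolates in \Cref{rem:kodd}, and the kind of non-discrete, fiber-by-fiber cancellation you gesture at is what the paper only achieves for $n=1$ and odd $k$, at the cost of the lengthy analysis of the correspondence $\Rk$ in \Cref{sec:1tok-kodd}. Without an explicit pairing rule and a distortion estimate covering all pairs, your argument does not establish the bound.

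The paper's proof sidesteps all of this by choosing point sets for which the crude Voronoi collapse already works. In $S^k$ it takes the $2(k+1)$ cross-polytope vertices $\{\pm e_1,\ldots,\pm e_{k+1}\}$, whose separation is $\tfrac{\pi}{2}$ and whose Voronoi diameter is $\arccos\left(\tfrac{-(k-1)}{k+1}\right)\le\tfrac{k\pi}{k+1}$ (\Cref{lem:crosspolytope}, \Cref{lem:calc}); in $S^n$ it builds an antipodal set of the \emph{same} cardinality $2(k+1)$ by starting from the cross-polytope in $S^n$ and distributing the remaining $2(k-n)$ points evenly along the geodesic arcs between non-antipodal vertices (\Cref{lem:arcs}), which keeps the Voronoi diameter at most $\tfrac{\pi n}{n+1}$ and the separation at least $\tfrac{\pi}{k-n+3}\ge\tfrac{\pi}{k+1}$. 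Then \Cref{thm:induced-correspondence} immediately bounds the distortion of $R_{P,Q}$ by $\max\{\vdiam(P),\,\pi-\sep(P),\,\vdiam(Q),\,\pi-\sep(Q)\}\le\tfrac{\pi k}{k+1}$, with the $n=1$ case handled by \Cref{thm:1tok-easy}. If you want to salvage your approach, the lesson is that the point sets must be antipodal, of equal size, and chosen so that both separation and Voronoi diameter are simultaneously controlled --- not a simplex and a bare great circle.
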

\noindent We will establish the $n=1$ case of \Cref{thm:ntok} in  \Cref{sec:1tok-initial}. 
The general case is treated in \Cref{sec:general}.

\paragraph{Packings, coverings, and asymptotic results.}
The bound of \Cref{thm:ntok} has the advantage that it is concrete and simple to state.
However, we are able to prove a more granular set of results which gives an asymptotic improvement on \Cref{thm:ntok} for fixed $n\ge 2$, and gives a sharp (up to constants) asymptotic description of $2\cdott d_\gh(S^2,S^k)$.
 To discuss these results, we first introduce some definitions and notation.

We will consider projective space $\RP^n$, endowed with the quotient metric induced by the geodesic metric on $S^n$.
We denote this metric by $d_{\RP^n}$.
Below, we will make repeated use of the following two parameters:
\begin{align*}
p_m(\RP^n) &\eqdef \sup \{\varepsilon > 0 \mid \text{$\exists x_1,\ldots, x_m$ in $\RP^n$ so that $d_{\RP^n}(x_i,x_j) \ge \varepsilon$ for all $i\neq j$ }\},\\
c_m(\RP^n) &\eqdef\, \inf \{\varepsilon > 0 \mid \text{$\exists x_1,\ldots, x_m$ in $\RP^n$ so that for all $x\in \RP^n$ there is $i$ with $d_{\RP^n}(x, x_i)\le \varepsilon$ }\}.
\end{align*}
The parameter $p_m(\RP^n)$ is the minimum distance between distinct points in an optimal packing of $m$ points in $\RP^n$ (i.e. a projective code). 
The parameter $c_m(\RP^n)$ is the smallest radius needed to cover $\RP^n$ by $m$-many metric balls. 
By considering an optimal packing with the additional property that as few points as possible have pairwise distance exactly $p_m(\RP^n)$, one can see that $c_m(\RP^n) \le p_m(\RP^n)$. 
A standard argument using the fact that the volume of a ball in $\RP^n$ is proportional to the $n$-th power of its radius shows that, for fixed $n$, $p_m(\RP^n)$ and $c_m(\RP^n)$ are bounded above and below by constant multiples of $\frac{1}{\sqrt[n]{m}}$. 
With these parameters in hand, we proceed to our results.

\begin{restatable}{theorem}{packing}\label{thm:packing}
For all $2\le n < k < \infty$, we have \[
2\cdott d_{\gh}(S^n, S^k) \le \max \left \{\arccos\left(\frac{-(k-1)}{k+1}\right) ,\,\, \pi - p_{k+1}(\RP^n),\,\, 2p_{k+1}(\RP^n) \right\}.
\]
\end{restatable}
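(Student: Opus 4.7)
\emph{Proof proposal.}
The plan is to construct an explicit correspondence $R\subseteq S^n\times S^k$ whose distortion is at most
\[
M:=\max\{\arccos(-(k-1)/(k+1)),\ \pi-\delta,\ 2\delta\},
\]
where $\delta:=p_{k+1}(\RP^n)$, and then conclude via the standard identity $2\cdot d_\gh(S^n,S^k)=\inf_R \dis(R)$.

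First I fix an optimal packing $\{[v_0],\ldots,[v_k]\}\subseteq\RP^n$ realizing $\delta$ and lift it to $v_0,\ldots,v_k\in S^n$ (the sign choices to be optimized below). On the $S^k$ side I use the vertices of the standard cross-polytope $\{\pm e_0,\ldots,\pm e_k\}$. Write $V_i^\epsilon$ and $E_i^\epsilon$ for the Voronoi cells of $\epsilon v_i$ and $\epsilon e_i$ in $S^n$ and $S^k$ respectively. Two geometric facts drive the estimate: each $V_i^\epsilon$ has diameter at most $2\delta$ (since the covering radius of an optimal packing in $\RP^n$ is bounded by its packing radius), and each $E_i^\epsilon$ has diameter exactly $\arccos(-(k-1)/(k+1))$, realized between the antipodal diagonal vertices $(e_i\pm\sum_{m\ne i}e_m)/\sqrt{k+1}$. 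Define
\[
R:=\bigcup_{i,\epsilon} V_i^\epsilon\times E_i^\epsilon,
\]
with an antipodally symmetric tie-breaking rule on shared cell boundaries so that fibers remain controlled. I bound $\dis(R)$ by case analysis on a pair $(x,y),(x',y')\in R$: the same-cell case $(i,\epsilon)=(j,\epsilon')$ gives distortion at most $\max(2\delta,\arccos(-(k-1)/(k+1)))$ directly from the two cell-diameter bounds, and the antipodal case $(i=j,\,\epsilon\ne\epsilon')$ yields the same bound after subtracting $\pi$. The cross-index case $i\ne j$ is where the $\pi-\delta$ term enters: $d_{S^k}(\epsilon e_i,\epsilon' e_j)=\pi/2$, while the packing inequality forces $d_{S^n}(\epsilon v_i,\epsilon' v_j)\in[\delta,\pi-\delta]$.

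The principal obstacle I anticipate is precisely this cross-index case. Cross-polytope cells $E_i^\epsilon$ and $E_j^{\epsilon'}$ with $i\ne j$ share boundary faces (e.g., $\{y_i=y_j\}$ between $E_i^+$ and $E_j^+$), so pairs $(x,y),(x',y')\in R$ can have $y$ and $y'$ arbitrarily close while $x,x'$ lie in disjoint Voronoi regions of $S^n$ separated by as much as $d_{S^n}(\epsilon v_i,\epsilon' v_j)+2\delta$. A naive triangle-inequality argument gives a distortion bound close to $\pi/2+\delta+\arccos(-(k-1)/(k+1))$, which exceeds $M$; obtaining the sharper bound $\pi-\delta$ will require exploiting the fact that $d_{S^n}(x,x')$ and $d_{S^k}(y,y')$ cannot simultaneously attain their worst-case extremes, together with a careful antipodally symmetric choice of lifts $\{v_i\}$ and tie-breaking convention that rules out the pathological boundary configurations. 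Verifying this refined cross-index estimate uniformly over all $i\ne j$ and all sign patterns is the technical heart of the argument.
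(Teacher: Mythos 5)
Your correspondence is not the one that makes this argument work, and the difficulty you flag in the cross-index case is not merely technical---it is fatal for the product-of-cells relation $R=\bigcup_{i,\epsilon}V_i^\epsilon\times E_i^\epsilon$. For any choice of lifts, the $2(k+1)$ Voronoi cells in $S^n$ tile a connected sphere, so some positive cell $V_i$ must share a boundary with some negative cell $-V_j$ with $j\neq i$ (and for large $k$ the cells are too small for $V_i$ to touch $-V_i$). Hence there are interior points $x\in V_i$ and $x'\in V_j$ with $x'$ arbitrarily close to $-x$, so $d_{S^n}(x,x')\to\pi$; meanwhile the cross-polytope cells $E_i$ and $E_j$ always share a face in $S^k$, so you can take $y\in E_i$, $y'\in E_j$ with $d_{S^k}(y,y')\to 0$. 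These four points are related by your $R$, so $\dis(R)$ is essentially $\pi$, no matter how you tie-break on cell boundaries (the offending points are interior). So the ``refined cross-index estimate'' you defer cannot be supplied for this correspondence, and the proposal as written does not prove the bound.

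The paper's construction avoids exactly this by never relating two generic cell points to each other: the correspondence of Theorem~\ref{thm:induced-correspondence} collapses each cell $G_i\subseteq S^k$ to the single point $p_i\in S^n$ and each cell $F_i\subseteq S^n$ to the single point $q_i\in S^k$, so every related pair has at least one coordinate equal to a center. In the cross-index case one then gets $d_{S^n}(p_i,x')\ge \sep_{S^n}(P)/2$ and $d_{S^k}(y,q_j)\ge\sep_{S^k}(Q)/2$ by the triangle inequality, which is what produces the clean bound $\max\{\vdiam(P),\ \pi-\sep(P),\ \vdiam(Q),\ \pi-\sep(Q)\}$; Theorem~\ref{thm:packing} then follows by taking $Q$ the cross-polytope (Lemma~\ref{lem:crosspolytope}) and $P$ an antipodal lift of an optimal packing. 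One further point: your parenthetical claim that the covering radius of an optimal packing is automatically at most $p_{k+1}(\RP^n)$ is not immediate for a packing of fixed cardinality maximizing the minimum distance; the paper gets $\vdiam(P)\le 2p_{k+1}(\RP^n)$ only after choosing, among optimal packings, one with as few pairs as possible at distance exactly $p_{k+1}(\RP^n)$, so that a far-away uncovered point would let you strictly improve. You would need that (or an equivalent) argument as well.
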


Using \Cref{thm:packing}, we obtain an asymptotic result describing the gap between $2\cdott d_{\gh}(S^n, S^k)$ and $\pi$ for fixed $n$.
Below, $\Omega\left(\frac{1}{\sqrt{k}}\right)$ denotes a function that is bounded below by a positive multiple of $\frac{1}{\sqrt{k}}$.

\begin{restatable}{corollary}{asymptoticcor}\label{cor:asymptotic}
For fixed $n\ge 2$, we have \[
\pi - 2\cdott d_{\gh}(S^n, S^k) =  \Omega\left(\frac{1}{\sqrt{k}}\right).
\]
\end{restatable}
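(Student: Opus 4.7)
The plan is to invoke \Cref{thm:packing} directly and verify that each of the three quantities appearing inside the $\max$ is bounded away from $\pi$ by at least a constant multiple of $1/\sqrt{k}$ when $n\ge 2$ is held fixed. By \Cref{thm:packing},
\[
\pi - 2\cdott d_\gh(S^n,S^k)\;\ge\;\min\!\left\{\pi-\arccos\!\left(\tfrac{-(k-1)}{k+1}\right),\;\; p_{k+1}(\RP^n),\;\;\pi-2p_{k+1}(\RP^n)\right\},
\]
so it suffices to show each of the three quantities on the right-hand side is $\Omega(1/\sqrt{k})$.

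For the first quantity, I would use the identity $\pi-\arccos(x)=\arccos(-x)$ to rewrite it as $\arccos\!\left(1-\tfrac{2}{k+1}\right)$, and then invoke the standard asymptotics $\arccos(1-t)=\sqrt{2t}+O(t^{3/2})$ as $t\to 0^{+}$, giving a quantity of order $2/\sqrt{k+1}$. For the second quantity, I would appeal to the volumetric packing bound already noted in the paper (that for fixed $n$, $p_m(\RP^n)$ is bounded below by a constant multiple of $m^{-1/n}$): applied with $m=k+1$, this yields $p_{k+1}(\RP^n)\ge c_n(k+1)^{-1/n}$, and since $n\ge 2$ this is $\Omega(1/\sqrt{k})$. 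For the third quantity, the matching upper bound $p_m(\RP^n)\le C_n\cdot m^{-1/n}$ implies $p_{k+1}(\RP^n)\to 0$, so $\pi-2p_{k+1}(\RP^n)\to \pi$; thus for all sufficiently large $k$ this term exceeds, say, $\pi/2$, which is trivially $\Omega(1/\sqrt{k})$.

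Taking the minimum of the three lower bounds produces a function of $k$ that is $\Omega(1/\sqrt{k})$, which is exactly the claim. I do not expect a real obstacle here: the corollary is essentially a consolidation of quantitative facts already in hand. The only minor care required is to absorb the finitely many small values of $k$ (where, for instance, $p_{k+1}(\RP^n)$ might not yet be small enough to make the third term as large as $\pi/2$) into the implied constant, which is routine since each of the three quantities is a positive continuous function of $k$ on any bounded range.
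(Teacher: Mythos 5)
Your proposal is correct and follows essentially the same route as the paper: apply \Cref{thm:packing}, convert the $\max$ into a $\min$ of the three complementary quantities, bound $\arccos\left(\tfrac{k-1}{k+1}\right)$ by an expression of order $\tfrac{1}{\sqrt{k}}$ (the paper uses $\arccos(x)\ge\sqrt{2-2x}$ where you use the Taylor-type asymptotic, which is equivalent in effect), use the volumetric lower bound $p_{k+1}(\RP^n)\ge c_n(k+1)^{-1/n}\ge c_n'k^{-1/2}$ for $n\ge 2$, and note that $\pi-2p_{k+1}(\RP^n)$ tends to $\pi$ and is therefore harmless. The only difference is bookkeeping over finitely many small $k$, which both treatments handle (implicitly or explicitly) without issue.
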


Previous work (see \cite[Theorem 5.3]{dghpolymath}) has shown that $2\cdott d_\gh(S^n, S^k) \ge \pi - 2c_{k}(\RP^n)$.
In particular $\pi - 2\cdott d_\gh(S^n,S^k) \le \Theta\left(\tfrac{1}{\sqrt[n]{k}}\right)$, where $\Theta\left(\tfrac{1}{\sqrt[n]{k}}\right)$ denotes a positive function that is bounded above and below by constant multiples of $\tfrac{1}{\sqrt[n]{k}}$.
For $n=2$, this matches (up to constants) the upper bound of \Cref{cor:asymptotic}, allowing us to exactly determine the asymptotic behavior of $\pi - 2\cdott d_\gh(S^2, S^k)$.
\begin{corollary}\label{cor:asymptotic2}
The Gromov--Hausdorff distance between $S^2$ and $S^k$ satisfies \[
\pi - 2\cdott d_{\gh}(S^2, S^k) = \Theta\left(\frac{1}{\sqrt{k}}\right).
\]
\end{corollary}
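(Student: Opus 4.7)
The plan is to prove this corollary as a direct consequence of two matching asymptotic bounds, one of which is \Cref{cor:asymptotic} and the other of which is stated in the paragraph immediately preceding the corollary. The statement is an equality of asymptotic order, so I need to establish both an $\Omega(1/\sqrt{k})$ lower bound and an $O(1/\sqrt{k})$ upper bound on $\pi - 2\cdott d_\gh(S^2,S^k)$.

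For the lower bound, I simply specialize \Cref{cor:asymptotic} to $n=2$, which gives $\pi - 2\cdott d_\gh(S^2,S^k) = \Omega(1/\sqrt{k})$ directly. No additional argument is required here, since this is the content of \Cref{cor:asymptotic} at $n=2$.

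For the matching upper bound, I would invoke the cited result from \cite[Theorem 5.3]{dghpolymath}, which states $2\cdott d_\gh(S^n,S^k) \ge \pi - 2c_k(\RP^n)$, and rearrange to obtain $\pi - 2\cdott d_\gh(S^2,S^k) \le 2c_k(\RP^2)$. Since balls in $\RP^n$ have volume proportional to the $n$-th power of their radius (for small enough radius), a standard volume-covering argument yields $c_k(\RP^n) = O(1/\sqrt[n]{k})$; specializing to $n=2$ gives $c_k(\RP^2) = O(1/\sqrt{k})$, and hence $\pi - 2\cdott d_\gh(S^2,S^k) = O(1/\sqrt{k})$.

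Combining the two matching bounds proves $\pi - 2\cdott d_\gh(S^2,S^k) = \Theta(1/\sqrt{k})$. Since both inputs are stated explicitly in the excerpt, there is essentially no obstacle to the proof; the only subtlety is ensuring that the implicit constants coming from the volume argument in $\RP^2$ and from \Cref{cor:asymptotic} are positive, which they are by construction. The reason this corollary is special to $n=2$ is that the exponents in the lower bound $\Omega(1/\sqrt{k})$ from \Cref{cor:asymptotic} and in the upper bound $O(1/\sqrt[n]{k})$ coincide precisely when $n=2$; for $n\ge 3$ a gap between the two exponents remains, and closing it would presumably require sharpening one of the two ingredients.
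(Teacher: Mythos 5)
Your proposal is correct and matches the paper's own (implicit) argument: the paper likewise obtains the upper bound $\pi - 2\cdott d_\gh(S^2,S^k) \le 2c_k(\RP^2) = O\left(\tfrac{1}{\sqrt{k}}\right)$ from \cite[Theorem 5.3]{dghpolymath} together with the standard volume estimate, and combines it with the $\Omega\left(\tfrac{1}{\sqrt{k}}\right)$ lower bound from \Cref{cor:asymptotic} at $n=2$. Nothing further is needed.
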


We conjecture that in general the $n$-th root bound is the correct answer for the asymptotic behavior of $\pi - 2\cdott d_\gh(S^n,S^k)$ when $n$ is fixed.
Note that \Cref{thm:1tok} and \Cref{cor:asymptotic2} prove this conjecture for $n=1$ and $n=2$ respectively.

\begin{restatable}{conjecture}{asymptoticconj}\label{conj:asymptotic}
For fixed $n\ge 1$, we have 
\[
\pi - 2\cdott d_{\gh}(S^n, S^k) = \Theta\left(\frac{1}{\sqrt[n]{k}}\right).
\]
\end{restatable}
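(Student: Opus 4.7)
The conjecture splits into two inequalities of very different difficulty, and I would treat them separately.

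For the direction $\pi - 2\cdott d_\gh(S^n, S^k) \le O(k^{-1/n})$ I would simply invoke the polymath bound $2\cdott d_\gh(S^n, S^k) \ge \pi - 2c_k(\RP^n)$ recalled just after \Cref{cor:asymptotic}, together with the standard volume-covering estimate $c_k(\RP^n) \le C_n \cdot k^{-1/n}$ (a ball of radius $\varepsilon$ in $\RP^n$ has volume $\Theta(\varepsilon^n)$). This gives the upper direction immediately for every fixed $n$.

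The substance is the reverse inequality $\pi - 2\cdott d_\gh(S^n, S^k) \ge \Omega(k^{-1/n})$, which amounts to exhibiting correspondences between $S^n$ and $S^k$ of distortion at most $\pi - \Omega(k^{-1/n})$. \Cref{thm:1tok} handles $n=1$ and \Cref{cor:asymptotic2} handles $n=2$. For $n \ge 3$ the obstruction sits inside \Cref{thm:packing}: the packing terms $\pi - p_{k+1}(\RP^n)$ and $2 p_{k+1}(\RP^n)$ have the correct order, namely $\pi - \Theta(k^{-1/n})$ and $\Theta(k^{-1/n})$, but the third term $\arccos\bigl(-\tfrac{k-1}{k+1}\bigr) = \pi - \Theta(k^{-1/2})$ is too close to $\pi$ and dominates the maximum, which is precisely why \Cref{cor:asymptotic} only yields $\Omega(k^{-1/2})$. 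My plan would be to sharpen \Cref{thm:packing} by replacing the configuration responsible for this third term: in that construction each cell of an optimal $(k{+}1)$-packing of $\RP^n$ is paired with a vertex of a regular simplex in $S^k$, whose pairwise angles are $\arccos(-1/k)$ and hence close to $\pi$ only at rate $k^{-1/2}$. A natural substitute would be to use a configuration of $m \gg k+1$ points in $S^k$ with a better diameter profile, and pair it against a correspondingly finer packing of $\RP^n$, so that the image in $S^k$ has diameter $\pi - \Omega(k^{-1/n})$ rather than $\pi - \Theta(k^{-1/2})$.

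The main obstacle is the tension in choosing the target size $m$. Enlarging $m$ improves the simplex-diameter term, but shrinks $p_m(\RP^n)$ and thereby worsens the far-distance term $\pi - p_m(\RP^n)$; the two constraints must simultaneously be within $\Omega(k^{-1/n})$ of their extremes. I expect that resolving this requires either a carefully non-uniform correspondence, or the construction of a spherical code in $S^k$ that reflects the $n$-dimensional geometry of $S^n$ more subtly than any single symmetric packing does. Maintaining equivariance with respect to the antipodal action throughout is an additional but not fundamentally new constraint. This balancing act, rather than any isolated technical hurdle, is in my view the essential difficulty, and it is exactly the reason the conjecture remains open.
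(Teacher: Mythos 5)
The statement you are addressing is \Cref{conj:asymptotic}, which the paper states as an open conjecture: it is proved there only for $n=1$ (via \Cref{thm:1tok}) and $n=2$ (via \Cref{cor:asymptotic2}), and no proof for $n\ge 3$ exists in the paper. Your assessment matches the paper's own discussion: the direction $\pi - 2\cdott d_\gh(S^n,S^k) \le O(k^{-1/n})$ is exactly the covering-radius bound $2\cdott d_\gh(S^n,S^k)\ge \pi - 2c_k(\RP^n)$ recalled after \Cref{cor:asymptotic}, and the obstruction to getting $\Omega(k^{-1/n})$ for $n\ge 3$ is indeed the first term in \Cref{thm:packing}, which is $\pi - \Theta(k^{-1/2})$ and dominates the maximum. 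So your proposal is a correct description of what is known and of where the difficulty lies, but it does not constitute a proof of the conjecture: the $n\ge 3$ lower-bound direction is only a plan, as you acknowledge, and that is precisely the part that remains open.

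Two corrections and a caution on the plan itself. First, the dominating term $\arccos\bigl(\tfrac{-(k-1)}{k+1}\bigr)$ is not a pairwise angle of a regular simplex (which would be $\arccos(-\tfrac1{k+1})\approx \tfrac{\pi}{2}$, nowhere near $\pi$); it is the Voronoi \emph{diameter} of the cross-polytope vertex set $Q=\{\pm e_1,\dots,\pm e_{k+1}\}\subseteq S^k$ computed in \Cref{lem:crosspolytope} --- the separation of $Q$ is $\tfrac{\pi}{2}$ and is harmless. So ``replacing the simplex'' means finding an antipodal set in $S^k$ whose Voronoi cells all have diameter at most $\pi-\Omega(k^{-1/n})$ while \Cref{thm:induced-correspondence} still forces $|Q|=|P|$, and the separation constraint $\sep(P)\ge\Omega(k^{-1/n})$ caps $|P|$ at $O_n(k)$ points; whether $O_n(k)$ antipodal points in $S^k$ can have Voronoi diameter $\pi-\omega(k^{-1/2})$ is exactly the unresolved balancing act. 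Second, be aware that the paper itself signals (in the methods paragraph and in \Cref{rem:kodd}) that the fully discrete cell-collapsing framework of \Cref{thm:induced-correspondence} can be \emph{provably} incapable of reaching the truth --- this already happens for $n=1$ and odd $k$, where the sharp bound required the hybrid embedding--projection correspondence of \Cref{sec:1tok-kodd}. So a successful attack on $n\ge 3$ may need to leave the packing-versus-Voronoi-diameter framework entirely rather than optimize within it.
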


\paragraph{Our methods.}

To prove upper bounds on $2\cdott d_\gh(S^n, S^k)$, it suffices to construct (possibly discontinuous) correspondences between $S^n$ and $S^k$ which do not distort the metric too much (see \Cref{sec:background}). 
 Lim, M\'emoli, and Smith~\cite{LMS22} constructed optimal correspondences between low-dimensional spheres by decomposing the larger sphere into discrete chunks, which were then collapsed to points in the lower-dimensional sphere. 
 They used a similar approach to build correspondences between $S^{n+1}$ and $S^n$ for all $n$, a construction which was further developed in \cite[Section 6]{dghpolymath}.
 Continuing in a similar vein, our approach provides a clean framework for constructing correspondences between $S^n$ and $S^k$ for any $n$ and $k$, and it bounds the resulting distortion (see \Cref{thm:induced-correspondence}).
 Despite the thematic through-line in the development of these methods, the particular correspondences that we construct are wholly new---existing correspondences in the literature are not a special case of them.

More specifically, our approach uses finite centrally symmetric point sets in $S^n$ and $S^k$ to break the spheres into discrete chunks, which are then collapsed into the opposite sphere in such a way that the distortion is quantifiable.
It is possible that our bounds could be improved in some cases with different choices of centrally symmetric point sets, but we suspect
that determining the exact value of $d_{\gh}(S^n,S^k)$ for $2\le n < k$ will require constructions that are not discrete in this way.
In fact, the completely discrete approach is already insufficient to prove the odd-dimensional case of \Cref{thm:1tok}; see \Cref{rem:kodd}.
Our proof of the odd-dimensional case in \Cref{sec:1tok-kodd} necessarily avoids collapsing full-dimensional chunks of either sphere to a point, but we maintain enough discrete structure that we are able to quantify the distortion. 
This approach builds on the ``embedding projection correspondences'' of M\'emoli and Smith~\cite{memolismith24}, and such an approach may be fruitful for general $n$ and $k$.

\section{Background}\label{sec:background}

The Gromov--Hausdorff distance $d_\gh(X, Y)$ between metric spaces $X$ and $Y$ is the infimum over all isometric embeddings of $X$ and $Y$ into a larger metric space, of the Hausdorff distance between their images.
In other words, $d_\gh(X,Y)$ captures how closely $X$ and $Y$ can be ``aligned" in some ambient space. 
Below we give an equivalent definition of the Gromov--Hausdorff distance due to Kalton and Ostrovskii~\cite{KO99}, which will prove convenient for our purposes. 

We first require some additional terminology.
A \emph{correspondence} between two sets $X$ and $Y$ is a relation $R\subseteq X\times Y$ with the property that $\pi_X(R) = X$ and $\pi_Y(R) = Y$; that is, the coordinate projections of $R$ to $X$ and $Y$ are surjective. Equivalently, a correspondence relates every point of $X$ to a point of $Y$, and vice versa.
If $X$ and $Y$ are metric spaces, the \emph{distortion} of a correspondence $R$ is \[
\dis(R) \eqdef \sup_{(x,y), (x',y')\in R} \big|d_X(x,x') - d_Y(y,y')\big|.
\]
Above, $d_X$ and $d_Y$ denote the respective metrics on $X$ and $Y$.
Informally, $\dis(R)$ is small if $R$ does not ``pull apart" or ``push together" points too much. 
Finally, the \emph{Gromov--Hausdorff distance} between compact metric spaces $X$ and $Y$ is given by the relationship \begin{equation}\label{eq:dgh}
2\cdott d_\gh(X,Y) \eqdef \inf_{R} \dis(R)
\end{equation}
where the infimum is taken over all correspondences $R\subseteq X\times Y$. 
 
 Our metric spaces of interest are unit spheres $S^n \eqdef \{x\in \R^{n+1} \mid \langle x,x\rangle = 1\}$, equipped with the \emph{geodesic metric}\[
 d_{S^n}(x,x') \eqdef \arccos(\langle x, x'\rangle). 
 \]
We will always consider spheres of positive finite dimension, as the cases of $S^0$ and $S^\infty$ have been fully treated by previous work~\cite{LMS22}. 
With the geodesic metric, $S^n$ has diameter equal to $\pi$, and a quick consequence is that $2\cdott d_\gh(S^n, S^k)$ is at most $\pi$.
As previously mentioned, Lim, M\'emoli, and Smith~\cite{LMS22} showed that in fact  $2\cdott d_\gh(S^n, S^k)$ is strictly less than $\pi$ when $1\le n < k < \infty$. 

We say that a set $P\subseteq S^n$ is \emph{antipodal} or \emph{centrally symmetric} if $x\in P$ implies $-x\in P$. 
We often write finite antipodal sets as $P = \{\pm p_1, \ldots, \pm p_m\}$. 
For a metric space $(X, d_X)$, the \emph{separation} of a finite set $P\subseteq X$ is the minimum distance between distinct points in the set, that is \[
\sep_X(P) \eqdef \min_{p\neq p'}\{d_{X}(p,p')\}
\]
where the minimum above is over all $p,p'\in P$ with $p\neq p'$. 
Note that if $P\subseteq S^n$ is an antipodal set containing more than two points, then $\sep_{S^n}(P)\le\frac{\pi}{2}$. 

Each finite subset $P = \{p_1,\ldots, p_m\}$ of a metric space $(X, d_X)$ can be associated to its \emph{Voronoi diagram}  $\{X_1, \ldots, X_m\}$ where $X_i\eqdef\{x\in X\mid d_X(x,p_i)\le d_X(x, p_j) \text{ for all $j\neq i$}\}$. 
We call $X_i$ the \emph{Voronoi cell} associated to $p_i$. 
If $P = \{p_1,\ldots, p_m\}$ is a finite subset of a metric space $(X, d_X)$ with Voronoi diagram $\{X_1, \ldots, X_m\}$, then the \emph{Voronoi diameter} of $P$ is \[
\vdiam_X(P) \eqdef \max_{i\in[m]}\big\{\diam_{X}(X_i)\big \}.
\]
Note that if $P\subseteq S^n$ is antipodal, then the Voronoi diagram of $P$ is centrally symmetric.
Also note that $\vdiam_X(P) \le 2\cdott d_\HH(X, P)$ where $d_\HH$ denotes Hausdorff distance.

\section{Correspondences induced by antipodal sets}

First, we informally describe our construction.
Choose finite antipodal sets in $S^n$ and $S^k$ with equal size. 
Each of these sets decomposes $S^n$ and $S^k$ into Voronoi cells.
Since there are equally many cells and points in each sphere, we may collapse cells in $S^n$ to points in $S^k$, and vice versa, obtaining a correspondence (see \Cref{fig:induced} below). 
Provided that the cells are not too large and the points are sufficiently separated, such correspondences will have distortion bounded away from $\pi$. 
The following theorem formalizes and quantifies this technique. 

\begin{theorem}\label{thm:induced-correspondence}
Let $P = \{\pm p_1, \ldots, \pm p_m\}\subseteq S^n$ and $Q=\{\pm q_1,\ldots, \pm q_m\}\subseteq S^k$ be finite antipodal sets of equal size.
Let $\{\pm F_1, \ldots, \pm F_m\}$ and $\{\pm G_1,\ldots, \pm G_m\}$ be the Voronoi diagrams in $S^n$ and $S^k$ induced by $P$ and $Q$ respectively.
Define the correspondence \[
R_{P, Q} \eqdef  \big\{\pm (p_i, q) \,\big\vert\, i\in[m]\text{ and }q\in G_i\big\} \cup \big\{\pm (p, q_i)\,\big\vert\, i\in[m]\text{ and }p\in F_i\big\}.
\]
The distortion of $R_{P, Q}$ is at most the maximum of the following four quantities: \begin{align*}
\vdiam_{S^n}(P) &\qquad\pi-\sep_{S^n}(P)\\
\vdiam_{S^k}(Q) &\qquad \pi-\sep_{S^k}(Q).
\end{align*}
\end{theorem}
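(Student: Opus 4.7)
The plan is to verify that $R_{P,Q}$ is a correspondence and then bound its distortion by a case analysis on two of its elements. Surjectivity onto each sphere is immediate from the fact that the cells $\pm F_i$ tile $S^n$ and the cells $\pm G_i$ tile $S^k$. For the distortion, I will classify each element $(x,y) \in R_{P,Q}$ as Type 1 (when $x \in \pm P$ and $y$ lies in the cell of the matching $\pm q_i$) or Type 2 (when $y \in \pm Q$ and $x$ lies in the cell of the matching $\pm p_i$); the canonical pairs $\pm(p_i, q_i)$ are of both types. A pair of elements then falls into the both-Type-1, both-Type-2, or mixed case.

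In the both-Type-1 case, $x_1, x_2 \in \pm P$ and each $y_i$ lies in the cell of the associated center. I would split on whether $x_1 = x_2$, $x_1 = -x_2$, or $x_1 \neq \pm x_2$. If $x_1 = x_2$ the $y_i$'s share a cell, so $d_{S^k}(y_1,y_2) \leq \vdiam_{S^k}(Q)$ while $d_{S^n}(x_1,x_2) = 0$. If $x_1 = -x_2$, central symmetry places $y_1$ and $-y_2$ in a common cell, so $d_{S^k}(y_1,y_2) \geq \pi - \vdiam_{S^k}(Q)$ while $d_{S^n}(x_1,x_2) = \pi$. If $x_1 \neq \pm x_2$, the fact that both $x_2$ and $-x_2$ are in $\pm P$ and distinct from $x_1$ forces $d_{S^n}(x_1,x_2) \in [\sep_{S^n}(P), \pi - \sep_{S^n}(P)]$, and since $d_{S^k}(y_1,y_2)$ takes values in $[0,\pi]$, their difference is at most $\pi - \sep_{S^n}(P)$. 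The both-Type-2 case is symmetric and produces the analogous bound $\max\{\vdiam_{S^n}(P),\, \pi - \sep_{S^k}(Q)\}$.

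The mixed case is the main obstacle. Here $x_1 = p^*_1 \in \pm P$ and $y_2 = q^*_2 \in \pm Q$, with $y_1$ in the cell $G^*_1$ of the partner $q^*_1$ and $x_2$ in the cell $F^*_2$ of the partner $p^*_2$. When $p^*_1 = \pm p^*_2$, the same kind of diameter argument used in Case I bounds the distortion by $\max\{\vdiam_{S^n}(P), \vdiam_{S^k}(Q)\}$. The substantive sub-case is $p^*_1 \neq \pm p^*_2$, where I need a two-sided estimate on $d_{S^n}(x_1,x_2) = d_{S^n}(p^*_1, x_2)$. Since $x_2 \in F^*_2$ is at least as close to $p^*_2$ as to $p^*_1$, combining this with the triangle inequality on $p^*_1, x_2, p^*_2$ gives $d_{S^n}(x_2, p^*_1) \geq d_{S^n}(p^*_1, p^*_2)/2 \geq \sep_{S^n}(P)/2$. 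Applying the same argument with $-p^*_1$ in place of $p^*_1$ (which is also distinct from $p^*_2$) yields $d_{S^n}(x_2, -p^*_1) \geq \sep_{S^n}(P)/2$, hence $d_{S^n}(x_2, p^*_1) = \pi - d_{S^n}(x_2, -p^*_1) \leq \pi - \sep_{S^n}(P)/2$. The analogous pair of estimates in $S^k$ places $d_{S^k}(y_1,y_2)$ in $[\sep_{S^k}(Q)/2,\, \pi - \sep_{S^k}(Q)/2]$, so the mixed-case distortion is at most $\pi - (\sep_{S^n}(P) + \sep_{S^k}(Q))/2$, and in particular at most $\max\{\pi - \sep_{S^n}(P),\, \pi - \sep_{S^k}(Q)\}$.

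Assembling the bounds across all cases shows that $\dis(R_{P,Q}) \leq \max\{\vdiam_{S^n}(P),\, \pi - \sep_{S^n}(P),\, \vdiam_{S^k}(Q),\, \pi - \sep_{S^k}(Q)\}$, as claimed. The key idea is the halved-separation bound in the mixed case, which rests on the simple observation that a point in the Voronoi cell of $p^*_2$ is at least as close to $p^*_2$ as to both $p^*_1$ and $-p^*_1$; once this is in place, the rest is triangle-inequality bookkeeping.
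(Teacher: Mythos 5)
Your proof is correct and follows essentially the same route as the paper's: the same exhaustive case decomposition according to which coordinate of each pair is a center point (your three meta-cases with sub-cases match the paper's nine cases), including the identical halved-separation triangle-inequality bound, via the antipode $-p_1^*$, in the mixed case with distinct indices. The only harmless divergence is the mixed case with matching (or antipodal) indices, where the paper bounds both distances by $\pi/2$ using antipodality of $P$ and $Q$, while you bound them by the Voronoi diameters; both estimates are valid and stay within the claimed maximum.
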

\begin{figure}[h]
    \[
    \includegraphics{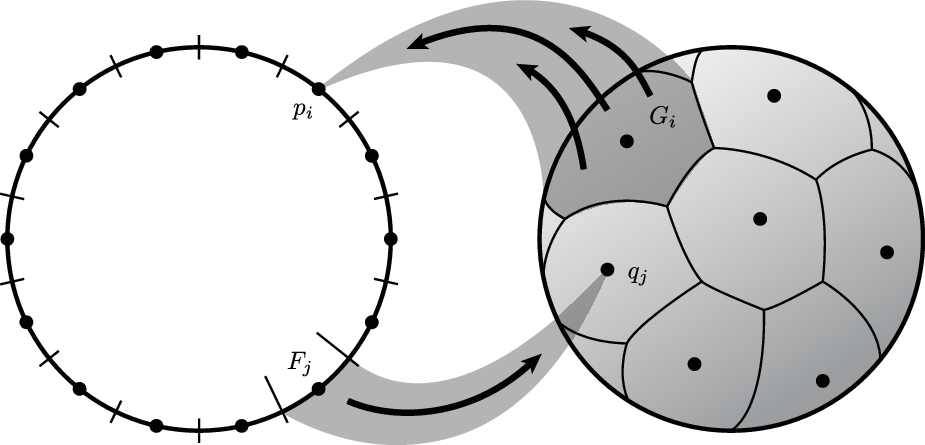}
    \]
    \caption{A sketch of the correspondence $R_{P,Q}$ of \Cref{thm:induced-correspondence}. 
    The cell $G_i\subseteq S^k$ collapses to the point $p_i\in S^n$, and the cell $F_j\subseteq S^n$ collapses to the point $q_j\in S^k$.}
    \label{fig:induced}
\end{figure}
\begin{proof}
First note that $R_{P,Q}$ is indeed a correspondence, as every point in $S^n$ appears in some Voronoi cell $\pm F_i$, and every point in $S^k$ appears in some Voronoi cell $\pm G_i$.
It remains to bound the quantity\begin{equation}\tag{$\dagger$}\label{eq:distortion}
\sup_{(x,y), (x',y')\in R_{P,Q}} \left|d_{S^n}(x,x') - d_{S^k}(y,y')\right|.
\end{equation}
We achieve this by considering a variety of cases, based on the definition of $R_{P,Q}$. 
Many of these cases are symmetric, thanks to the symmetry in the definition of $R_{P,Q}$, but we enumerate all of them for completeness.
To see that these nine cases cover all possibilities, note that if $(x,y)\in R_{P,Q}$ then $x\in P$ or $y\in Q$ (and possibly both are true). \smallskip

\noindent \textbf{Case 1:} $x = x' = \pm p_i$. \\
We have $y,y'\in G_i$ or $y,y'\in -G_i$. 
Thus (\ref{eq:distortion}) is equal to $d_{S^k}(y,y')$, which is at most $\vdiam_{S^k}(Q)$. \smallskip

\noindent \textbf{Case 2:} $x = -x' = \pm p_i$.\\
In this case $d_{S^n}(x,x') = \pi$, so (\ref{eq:distortion}) is equal to $\pi-d_{S^k}(y,y')$.
We have $y\in G_i$ and $y'\in -G_i$. 
By central symmetry of the Voronoi diagram in $S^k$, we have $-y'\in G_i$, and so \[
d_{S^k}(y,y') = \pi-d_{S^k}(y,-y')\ge \pi - \diam_{S^k}(G_i) \ge \pi-\vdiam_{S^k}(Q).\]
Plugging in and simplifying, we see (\ref{eq:distortion}) is again at most $\vdiam_{S^k}(Q)$. 
 \smallskip

\noindent \textbf{Case 3:} $x = \pm p_i$ and $x'= \pm p_j$ for $i\neq j$. \\
Here we have $\sep_{S^n}(P) \le d_{S^n}(x,x') \le \pi-\sep_{S^n}(P)$ since $x$ and $x'$ are distinct, non-antipodal points in $P$.  
From this, we see that  (\ref{eq:distortion}) is at most $\pi-\sep_{S^n}(P)$. 
 \smallskip

\noindent \textbf{Case 4:} $x = p_i$ and $y' = q_i$, or $x = -p_i$ and $y' = -q_i$.\\
By central symmetry, it suffices to consider the case $x = p_i$ and $y' = q_i$. 
We have $y\in G_i$ and $x'\in F_i$. 
Since $P$ and $Q$ are antipodal, we have $d_{S^n}(x,x') \le \pi/2$ and $d_{S^k}(y,y')\le \pi/2$.
Thus (\ref{eq:distortion}) is at most $\pi/2$, which is in turn at most $\pi-\sep_{S^n}(P)$ since $\sep_{S^n}(P)\le \pi/2$. 
\smallskip

\noindent \textbf{Case 5:} $x = p_i$ and $y' = -q_i$, or $x=-p_i$ and $y' = q_i$.\\
By central symmetry, it suffices to consider the case $x = p_i$ and $y' = -q_i$.
Here we have  $x'\in -F_i$ and $y\in G_i$. 
Using central symmetry of $P$ and $Q$, the former implies $d_{S^n}(p_i,x') \ge \pi/2$, and the latter implies $d_{S^k}(y, -q_i) \ge \pi/2$. 
These inequalities imply that (\ref{eq:distortion}) is at most $\pi/2$, which is in turn at most $\pi-\sep_{S^n}(P)$ since $\sep_{S^n}(P)\le \pi/2$.
\smallskip

\noindent \textbf{Case 6:} $x = \pm p_i$ and $y' = \pm q_j$ for $i\neq j$. \\
Here $x'$ lies in $\pm F_j$, and $y$ lies in $\pm G_i$. 
We claim that\[
\sep_{S^n}(P)/2 \le d_{S^n}(x,x') \le \pi-\sep_{S^n}(P)/2 \text{ and } \sep_{S^k}(Q)/2 \le d_{S^k}(y,y') \le \pi-\sep_{S^k}(Q)/2. 
\]
To see this, consider the case $x'\in F_j$ and $x=p_i$. Note by triangle inequality that \[
\sep_{S^n}(P) \le d_{S^n}(p_i, p_j) \le d_{S^n}(p_i, x') + d_{S^n}(x', p_j) \le 2d_{S^n}(p_i, x')
\]
where the last inequality follows from the fact that $x'\in F_j$.
This gives $\sep_{S^n}(P)/2 \le d_{S^n}(x,x')$, and the remaining cases follow by an analogous triangle inequality argument. 
Now, using the inequalities above we find that (\ref{eq:distortion}) is at most $\pi-(\sep_{S^n}(P) + \sep_{S^k}(Q))/2$, which is in turn bounded above by the maximum of $\pi - \sep_{S^n}(P)$ and $\pi-\sep_{S^k}(Q)$.
\smallskip

\noindent \textbf{Case 7:} $y = \pm q_i$ and $y' = \pm q_j$ for $i \neq j$.\\
Symmetric to Case 3. 
We have $\sep_{S^k}(Q) \le d_{S^n}(y,y') \le \pi-\sep_{S^k}(Q)$ since $y$ and $y'$ are distinct, non-antipodal points in $Q$.  
From this, we see that  (\ref{eq:distortion}) is at most $\pi-\sep_{S^k}(Q)$. 

\smallskip

\noindent \textbf{Case 8:} $y = -y' = \pm q_i$.\\
Symmetric to Case 2. 
Since $d_{S^k}(y,y') = \pi$, (\ref{eq:distortion}) is equal to $\pi-d_{S^n}(x,x')$.
However, $x\in F_i$ and $x'\in -F_i$, which implies $d_{S^n}(x,x') \ge \pi-\vdiam_{S^n}(P)$. 
Thus we find (\ref{eq:distortion}) is at most $\vdiam_{S^n}(P)$. 
\smallskip

\noindent \textbf{Case 9:} $y = y' = \pm q_i$. \\
Symmetric to Case 1. 
Here we have $x,x'\in F_i$ or $x,x'\in -F_i$, and so (\ref{eq:distortion}) is equal to $d_{S^n}(x,x')$, which is at most $\diam_{S^n}(F_i) \le \vdiam_{S^n}(P)$. 
\end{proof}

\begin{remark}
One may also wish to construct correspondences $R_{P,Q}$ using Voronoi diagrams of non-antipodal point sets $P$ and $Q$.
This can certainly be done, and a theorem similar to \Cref{thm:induced-correspondence} could be obtained, but the casework would be slightly different and more laborious.
The advantage of the antipodal symmetry in our theorem is that $R_{P,Q}$ will generally send antipodes to antipodes, and we need not worry that points which lie distance $\diam(S^n) = \diam(S^k) = \pi$ from another are ``pulled together'' by the correspondence. 
Without the antipodal assumption, several further quantities---such as the diameter of $P$ and $Q$---would need to be considered.
We have chosen the antipodal formulation since it simplifies our statements and constructions, and we are not aware of any cases where a non-antipodal argument would improve any of our bounds. 
This is consistent with the ``helmet trick" established by Lim, M\'emoli, and Smith~\cite{LMS22}, which shows that one may reduce to the case of antipode-preserving (or ``odd") functions between $S^n$ and $S^k$ when bounding distortion.
\end{remark}

\begin{remark}
The requirement that $P$ and $Q$ have the same size is one of the main technical obstacles in applying \Cref{thm:induced-correspondence}.
There are many natural antipodal sets in $S^n$, such as the vertices of the cross-polytope, the positive and negative copies of the vertices of a geodesic simplex, or the (normalized) vertices of a hypercube.
However, it is  usually not so straightforward to find a nice antipodal set of matching size in $S^k$.
\end{remark}

\section{From the 1-sphere to the $k$-sphere: an initial bound}\label{sec:1tok-initial}

To use \Cref{thm:induced-correspondence}, we must construct antipodal point sets in $S^n$ and $S^k$ with small Voronoi diameter and large separation, to the extent that this is possible.
Doing so in $S^1$ is straightforward: the best choice is simply to space points evenly around the circle.
In $S^k$ there are several natural choices, and one that will prove useful for us is the vertex set of the cross-polytope, i.e. the standard basis vectors and their negatives.
We start by characterizing the Voronoi diameter and separation of this set.

\begin{lemma}\label{lem:crosspolytope}
Let $Q = \{\pm e_1, \ldots, \pm e_{k+1}\}\subseteq S^k$ be the standard basis vectors and their negatives. 
Then for $k\ge 1$ we have\[
\vdiam_{S^k}(Q) = \arccos\left(\frac{-(k-1)}{k+1}\right) \qquad \text{ and } \qquad \sep_{S^k}(Q) = \frac{\pi}{2}. 
\]\end{lemma}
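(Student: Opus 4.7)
My plan is to compute the separation directly, then reduce the Voronoi diameter to a single cell via the hyperoctahedral symmetry of $Q$, and finally bound the diameter of that cell by a Cauchy--Schwarz argument.

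For the separation, any two distinct non-antipodal points of $Q$ are of the form $\pm e_i$ and $\pm e_j$ with $i\ne j$, so they have inner product $0$ and geodesic distance $\pi/2$, while antipodal pairs have distance $\pi$. Hence $\sep_{S^k}(Q) = \pi/2$. For the Voronoi diameter, the signed-permutation group acts on $S^k$ preserving $Q$ and transitively, so all Voronoi cells are congruent, and it suffices to compute the diameter of the cell $F$ containing $e_{k+1}$. Since $d_{S^k}(x, \pm e_i) = \arccos(\pm x_i)$ and $\arccos$ is decreasing,
\[
F = \{x \in S^k : x_{k+1} \ge |x_i| \text{ for all } 1 \le i \le k\}.
\]
Its vertices are the $2^k$ points $v_\sigma = \tfrac{1}{\sqrt{k+1}}(\sigma_1, \ldots, \sigma_k, 1)$ for $\sigma \in \{\pm 1\}^k$, and a direct computation gives $\langle v_\sigma, v_{-\sigma}\rangle = -\frac{k-1}{k+1}$, yielding the lower bound $\diam_{S^k}(F) \ge \arccos\left(\frac{-(k-1)}{k+1}\right)$.

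The main technical step is the matching upper bound: for every $x, y \in F$, I aim to show $\langle x, y\rangle \ge -\frac{k-1}{k+1}$. Setting $a = x_{k+1}$ and $b = y_{k+1}$, the constraints $|x_i|\le a$ combined with $\sum_{i=1}^{k+1} x_i^2 = 1$ force $a \ge 1/\sqrt{k+1}$ (and similarly for $b$), while Cauchy--Schwarz together with the bounds $|x_i|\le a$, $|y_i| \le b$ gives
\[
\langle x, y\rangle = ab + \sum_{i=1}^k x_i y_i \ge ab - \sqrt{(1-a^2)(1-b^2)}.
\]
Writing $a = \cos\alpha$ and $b = \cos\beta$ with $\alpha, \beta \in [0, \arccos(1/\sqrt{k+1})]$, the right-hand side becomes $\cos(\alpha + \beta)$, which is minimized when $a = b = 1/\sqrt{k+1}$; a double-angle calculation then gives $\cos(2\arccos(1/\sqrt{k+1})) = -\frac{k-1}{k+1}$. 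The equality case of the combined inequalities (namely $|x_i| = |y_i| = 1/\sqrt{k+1}$ with the signs of $x_i$ and $y_i$ opposite for all $i$) is realized exactly by a vertex pair $\{v_\sigma, v_{-\sigma}\}$, so the bound is attained in $F$ and the proof is complete. The delicate point is confirming attainability of Cauchy--Schwarz equality within $F$; this succeeds here precisely because every vertex of $F$ has all coordinates of equal absolute value $1/\sqrt{k+1}$.
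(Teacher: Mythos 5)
Your proof is correct and follows essentially the same route as the paper: reduce to a single Voronoi cell by symmetry, note the distinguished coordinate is at least $\tfrac{1}{\sqrt{k+1}}$, apply Cauchy--Schwarz to the remaining coordinates, and exhibit an explicit pair attaining $\langle x,y\rangle = \tfrac{-(k-1)}{k+1}$. The only cosmetic difference is that you optimize $ab-\sqrt{(1-a^2)(1-b^2)}$ via a trigonometric substitution, whereas the paper plugs in the worst-case bounds on each factor directly.
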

\begin{proof}
The equality $\sep_{S^k}(Q) = \frac{\pi}{2}$ is immediate since every pair of non-antipodal points in $Q$ are distance $\frac{\pi}{2}$ apart.
To obtain the Voronoi diameter, it suffices by symmetry to compute the diameter of the cell associated to $e_1$. This cell consists of all points in $S^k$ whose first coordinate is positive and maximum in absolute value among all the coordinates. 
In particular, every point in this cell has first coordinate at least as large as $\frac{1}{\sqrt{k+1}}$. 
Thus if $x$ and $x'$ lie in this cell, then
\begin{align*}
\langle x, x'\rangle
\ = \ 
\sum_{i=1}^{k+1} x_ix_i'
\ \ge \ 
\frac{1}{k+1} + \sum_{i=2}^{k+1} x_ix_i'
\ \ge \
\frac{1}{k+1} - \frac{k}{k+1}
\ = \ 
\frac{-(k-1)}{k+1}.
\end{align*}
Above, the last inequality follows by applying the Cauchy--Schwarz inequality to the truncated vectors obtained by deleting the first coordinates of $x$ and $x'$, noting that each truncated vector has norm at most $\sqrt{\tfrac{k}{k+1}}$. 
Taking the arccosine of both sides flips the direction of the inequality, and we obtain $d_{S^k}(x,x') \le \arccos\left(\tfrac{-(k-1)}{k+1}\right)$.
Moreover, equality is achieved when $x$ has all coordinates equal to $\tfrac{1}{\sqrt{k+1}}$ and $x'$ has first coordinate equal to $\tfrac{1}{\sqrt{k+1}}$ and all other coordinates equal to $\tfrac{-1}{\sqrt{k+1}}$.
\end{proof}

To analyze correspondences from $S^1$ to $S^k$, we first require a small technical lemma.

\begin{lemma}\label{lem:calc}
For $k\ge 3$, we have $\arccos\left(\frac{-(k-1)}{k+1}\right) \le \frac{(k-1)\pi}{k}$.
 \end{lemma}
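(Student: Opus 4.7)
The plan is to transform the inequality into a trigonometric statement that admits a clean convexity proof. Since cosine is strictly decreasing on $[0,\pi]$ and arccosine takes values there, the claim
\[
\arccos\!\left(\tfrac{-(k-1)}{k+1}\right) \le \tfrac{(k-1)\pi}{k}
\]
is equivalent to $\cos\!\left(\tfrac{(k-1)\pi}{k}\right) \le \tfrac{-(k-1)}{k+1}$. Using $\cos(\pi - \pi/k) = -\cos(\pi/k)$ together with the half-angle identity $\cos(\pi/k) = 1 - 2\sin^2(\pi/(2k))$, this reduces to the statement $\sin^2(\pi/(2k)) \le 1/(k+1)$.

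Next I would set $\theta = \pi/(2k)$, so that $\theta \in (0, \pi/6]$ for $k \ge 3$ and $k+1 = (\pi + 2\theta)/(2\theta)$. The inequality becomes $(\pi + 2\theta)\sin^2\theta \le 2\theta$; writing $2\theta = 2\theta\sin^2\theta + 2\theta\cos^2\theta$ and cancelling the common term yields the equivalent form
\[
\pi \tan^2\theta \le 2\theta \qquad \text{for } \theta \in (0, \pi/6].
\]

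Finally I would let $g(\theta) := \pi\tan^2\theta - 2\theta$ and observe that $g(0) = 0$ and $g(\pi/6) = \pi/3 - \pi/3 = 0$; the latter equality is the expected signature of the $k=3$ boundary case of the lemma. Differentiating gives $g'(\theta) = 2\pi\tan\theta\sec^2\theta - 2$, and $\tan\theta\sec^2\theta$ is strictly increasing on $(0, \pi/2)$ since its derivative $\sec^4\theta + 2\tan^2\theta\sec^2\theta$ is strictly positive there. Hence $g'$ is strictly increasing, $g$ is strictly convex on $[0, \pi/6]$, and a strictly convex function vanishing at both endpoints of an interval is nonpositive throughout. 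This gives $g \le 0$ on $[0, \pi/6]$, completing the proof.

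The step I expect to be the main obstacle is spotting the right algebraic simplification in the second step. Because the inequality is sharp at $k=3$, crude estimates like $\sin x \le x$ or $\sin x \ge 2x/\pi$ are not tight enough there; only the very clean reduction to $\pi\tan^2\theta \le 2\theta$, which is an equality at both $\theta = 0$ and $\theta = \pi/6$, allows the convexity argument to close the case in one line.
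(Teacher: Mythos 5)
Your proof is correct, and it takes a genuinely different route from the paper's. You reduce the claim, via monotonicity of cosine and the identities $\cos\bigl(\pi - \tfrac{\pi}{k}\bigr) = -\cos\bigl(\tfrac{\pi}{k}\bigr)$ and $\cos\bigl(\tfrac{\pi}{k}\bigr) = 1 - 2\sin^2\bigl(\tfrac{\pi}{2k}\bigr)$, to $\sin^2\bigl(\tfrac{\pi}{2k}\bigr) \le \tfrac{1}{k+1}$, and then with $\theta = \tfrac{\pi}{2k}$ to the clean inequality $\pi\tan^2\theta \le 2\theta$ on $(0,\tfrac{\pi}{6}]$, which you settle by observing that $g(\theta) = \pi\tan^2\theta - 2\theta$ has strictly increasing derivative $2\pi\tan\theta\sec^2\theta - 2$, hence is strictly convex, and vanishes at both endpoints $\theta = 0$ and $\theta = \tfrac{\pi}{6}$; every equivalence and the endpoint computation check out, and $\theta = \tfrac{\pi}{6}$ correctly encodes the equality case $k=3$. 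The paper instead treats $k=3$ separately as an exact equality and, for $k \ge 4$, invokes the elementary estimate $\arccos(x) \le \pi - \sqrt{2x+2}$ to get $\arccos\bigl(\tfrac{-(k-1)}{k+1}\bigr) \le \pi - \tfrac{2}{\sqrt{k+1}}$, then finishes with the algebraic observation $4k^2 \ge \pi^2(k+1)$. The paper's argument is shorter and avoids calculus, but it is not tight at $k=3$, which forces the case split and an off-the-shelf arccosine bound; your argument is uniform over all $k \ge 3$ (indeed over real $k \ge 3$), self-contained, and sharp at the boundary case, at the modest cost of a derivative computation and a convexity argument.
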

\begin{proof}
For $k=3$, we have equality.
For $k\ge 4$, we use the estimate $\arccos(x)\le \pi - \sqrt{2x+2}$, and immediately see that $\arccos\left(\frac{-(k-1)}{k+1}\right) \le \pi - \frac{2}{\sqrt{k+1}}$.
As $k\ge 4$, we have $4k^2 \ge \pi^2(k+1)$, which implies that $\frac{2}{\sqrt{k+1}} \ge \frac{\pi}{k}$. 
Hence $\arccos\left(\frac{-(k-1)}{k+1}\right) \le \pi - \frac{\pi}{k} = \frac{(k-1)\pi}{k}$ as desired.
\end{proof}

\begin{theorem}\label{thm:1tok-easy}
For $k\ge 2$, we have $2\cdott d_\gh(S^1, S^k) \le \frac{k\pi}{k+1}$.
\end{theorem}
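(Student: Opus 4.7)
The plan is to apply \Cref{thm:induced-correspondence} with carefully chosen antipodal point sets $P\subseteq S^1$ and $Q\subseteq S^k$, each of size $m = k+1$ (that is, $2(k+1)$ points total). For $Q$ we take the vertices of the cross-polytope, $Q = \{\pm e_1, \ldots, \pm e_{k+1}\}$, whose Voronoi diameter and separation are already computed in \Cref{lem:crosspolytope}. For $P$ we take the $2(k+1)$ vertices of a regular $2(k+1)$-gon inscribed in $S^1$, equally spaced around the circle.

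With this choice, $P$ has separation equal to the arc length between adjacent vertices, $\sep_{S^1}(P) = \frac{\pi}{k+1}$, and each Voronoi cell is a closed arc of the same length, so $\vdiam_{S^1}(P) = \frac{\pi}{k+1}$ as well. Plugging into \Cref{thm:induced-correspondence}, the distortion of $R_{P,Q}$ is at most the maximum of the four quantities
\[
\frac{\pi}{k+1},\qquad \pi - \frac{\pi}{k+1} = \frac{k\pi}{k+1},\qquad \arccos\!\left(\frac{-(k-1)}{k+1}\right),\qquad \pi - \frac{\pi}{2} = \frac{\pi}{2}.
\]
The first quantity is clearly the smallest, and $\frac{\pi}{2}\le \frac{k\pi}{k+1}$ for $k\ge 1$, so it remains to argue that the Voronoi diameter of the cross-polytope is also bounded by $\frac{k\pi}{k+1}$.

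For $k\ge 3$, \Cref{lem:calc} gives $\arccos\!\left(\tfrac{-(k-1)}{k+1}\right)\le \tfrac{(k-1)\pi}{k}$, and the elementary inequality $(k-1)(k+1)\le k^2$ shows $\tfrac{(k-1)\pi}{k}\le \tfrac{k\pi}{k+1}$, as desired. For the remaining case $k=2$, we need $\arccos(-\tfrac13)\le \tfrac{2\pi}{3} = \arccos(-\tfrac12)$, which holds since $\arccos$ is decreasing and $-\tfrac13 > -\tfrac12$. In either case the maximum of the four distortion quantities is exactly $\tfrac{k\pi}{k+1}$, and the definition of Gromov--Hausdorff distance via correspondences in \eqref{eq:dgh} yields the stated bound.

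There is no substantive obstacle here: \Cref{thm:induced-correspondence} does essentially all the work, and what remains is the arithmetic comparison of the cross-polytope's Voronoi diameter against $\tfrac{k\pi}{k+1}$, which \Cref{lem:calc} was formulated for precisely this purpose. The only small subtlety worth flagging is the matching of cardinalities --- it is fortunate that the cross-polytope in $S^k$ has exactly $2(k+1)$ vertices, matching the natural regular $2(k+1)$-gon in $S^1$, and this coincidence is what makes the two separations conspire to give the advertised $\tfrac{k\pi}{k+1}$.
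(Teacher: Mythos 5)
Your proposal is correct and is essentially the paper's own proof: the same choice of $2(k+1)$ evenly spaced points in $S^1$ and the cross-polytope vertices in $S^k$, the same application of \Cref{thm:induced-correspondence}, and the same use of \Cref{lem:crosspolytope} together with \Cref{lem:calc} (plus the direct $\arccos(-\tfrac13)<\tfrac{2\pi}{3}$ check for $k=2$) to verify that $\pi-\sep_{S^1}(P)=\tfrac{k\pi}{k+1}$ dominates. No gaps.
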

\begin{proof}
Let $P\subseteq S^1$ consist of $2(k+1)$ evenly spaced points, and let $Q\subseteq S^k$ be the standard basis vectors and their negatives.
The correspondence $R_{P,Q}$ of \Cref{thm:induced-correspondence} (shown in \Cref{fig:1to2}) will have distortion at most the maximum of $\vdiam(P)$, $\pi-\sep(P)$, $\vdiam(Q)$, and $\pi-\sep(Q)$.
We may immediately note that $\vdiam(P) = \sep(P) = \frac{\pi}{k+1}$, and $\sep(Q) = \frac{\pi}{2}$.
\Cref{lem:crosspolytope} tells us that $\vdiam(Q) = \arccos\left(\frac{-(k-1)}{k+1}\right)$.
When $k=2$, this is $\arccos\left(-\frac{1}{3}\right) < \arccos\left(-\frac{1}{2}\right) = \frac{2\pi}{3} = \frac{k\pi}{(k+1)}$, and for $k\ge 3$ \Cref{lem:calc} provides a slightly stronger upper bound of $\frac{(k-1)\pi}{k}$.
Plugging all of this in, we see that $R_{P,Q}$ has distortion at most $\pi - \sep(P) = \frac{k\pi}{k+1}$ as desired.
\end{proof}

\begin{remark}
The $k=2$ case of \Cref{thm:1tok-easy} shows that $2\cdott d_{\gh}(S^1,S^2) = \frac{2\pi}{3}$. 
Lim, M\'emoli, and Smith~\cite{LMS22} previously constructed correspondences between $S^1$ and $S^2$ with this optimal distortion, and their constructions likewise involved decomposing $S^2$ into chunks that were then collapsed to points in $S^1$. One of their constructions uses six isometric triangular regions, three in the upper hemisphere and three in the lower hemisphere, and another uses images of rectangular planar regions under a certain parametrization of $S^2$ (the latter appears in the arxiv version of~\cite{LMS22}). 

Our correspondence $R_{P,Q}$ uses a new decomposition of $S^2$, consisting of the six centrally projected facets of the cube.
\Cref{fig:1to2} illustrates these decompositions of $S^1$ and $S^2$ that are used to build $R_{P,Q}$. 
\end{remark}

\begin{figure}[h]
    \[
    \includegraphics{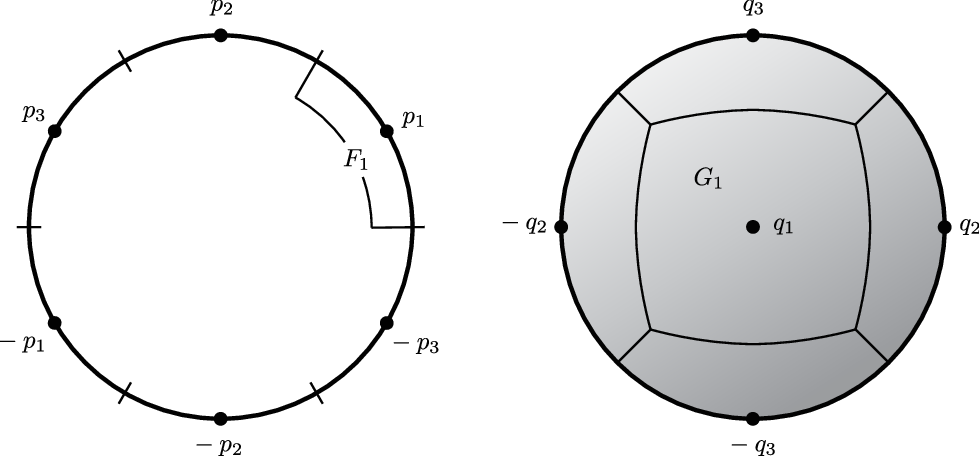}
    \]
    \caption{
    Voronoi cells in $S^1$ and $S^2$ induced by finite antipodal sets $P$ and $Q$ with six points each.
    The correspondence $R_{P,Q}$ of \Cref{thm:induced-correspondence} is optimal in this case, with distortion $\frac{2\pi}{3}$.}
    \label{fig:1to2}
\end{figure}

\begin{remark}\label{rem:kodd}
Note that \Cref{thm:1tok-easy} already implies that $d_{\gh}(S^1, S^{2\ell}) = \frac{2\ell\pi}{2\ell+1}$, which is half of \Cref{thm:1tok}. 
In the following section, we will establish the second half of \Cref{thm:1tok} by showing that   $d_{\gh}(S^1, S^{2\ell+1}) = \frac{2\ell\pi}{2\ell+1}$.
It turns out that \Cref{thm:induced-correspondence} is fundamentally incapable of establishing this result.
Even more strikingly, \Cref{thm:induced-correspondence} is incapable of improving the upper bound  $d_{\gh}(S^1, S^{2\ell+1}) = \frac{(2\ell+1)\pi}{2\ell+2}$ by any small $\varepsilon >0$.

An improvement would require us to construct finite antipodal sets $P\subseteq S^1$ and $Q\subseteq S^{2\ell+1}$ with Voronoi diameter strictly less than $\frac{(2\ell+1)\pi}{2\ell+2}$, and with separation strictly larger than $\frac{\pi}{2\ell+2}$.
By the requirement on separation, such a $P$ can consist of at most $2\ell$ pairs of points. 
Then $Q$ would likewise consist of at most $2\ell$ pairs of points, which would all be contained in an equator of codimension one. 
The poles above this equator would lie in every Voronoi cell associated to points in $Q$, and hence all Voronoi cells would have diameter $\pi$, making \Cref{thm:induced-correspondence} inapplicable.
\end{remark}

\section{From the $1$-sphere to the $k$-sphere: a tight bound for odd $k$}\label{sec:1tok-kodd}

Next we turn our attention to the problem of determining the Gromov--Hausdorff distance between $S^1$ and $S^k$ for odd $k$.
We have seen in \Cref{rem:kodd} that \Cref{thm:induced-correspondence} is incapable of proving \Cref{thm:1tok} for odd $k$.
The goal of this section is to develop a new correspondence $\Rk$ which addresses these issues.
We begin in Section \ref{sec:geom} with a more detailed analysis of the limitations of Theorem \ref{thm:induced-correspondence}; in particular, we see how these limitations suggest a natural geometric construction for the new correspondence $\Rk$.
In Section \ref{sec:geom} we describe $\Rk$ both analytically and geometrically, and in Sections \ref{sec:reduction} and \ref{sec:oddproof} we show that $\Rk$ has the desired distortion.
Section \ref{sec:mainproof} contains the resulting proof of \Cref{thm:1tok}.

\subsection{An improved correspondence}
\label{sec:geom}
In our proof of \Cref{thm:1tok-easy} we considered a correspondence $R_{P,Q}$ where $P\subseteq S^1$ consisted of $2k+2$ evenly spaced points, and $Q\subseteq S^k$ consisted of the vertices of a cross-polytope (i.e. the standard basis vectors and their negatives). 
This correspondence has distortion $\frac{k\pi}{k+1}$, and when $k$ is odd we would like to improve this to $\frac{(k-1)\pi}{k}$. 
We will see that a new geometric idea emerges from studying the precise limitations of the correspondence $R_{P,Q}$.

Consider any pair of points $p_i,p_j \in P$ which lie at distance $\tfrac{k\pi}{k+1}$ on $S^1$.
These points correspond to two nonantipodal Voronoi cells $G_i$ and $G_j$ in $S^k$, which intersect nontrivially (in a codimension $1$ subset), so that, by the definition of $R_{P,Q}$, each $x \in G_i \cap G_j$ corresponds to both $p_i$ and $p_j$.
Thus the distortion of $R_{P,Q}$ is bounded below by $d_{S^1}(p_i,p_j) - d_{S^k}(x,x) = \tfrac{k\pi}{k+1}$, larger than our desired distortion.
Similarly, if we instead choose two points $p_i', p_j'\in P$ at distance $\tfrac{\pi}{k+1}$ in $S^1$, then any pair of antipodal points $x' \in G_i$, $-x' \in G_j$ produces the same lower bound on distortion.

The evident issue is that, by associating all points of each cell $G_i$ to a single point, we have collapsed faraway points $p_i$ and $p_j$ to a single point $x$, and we have spread nearby points $p_i'$ and $p_j'$ to antipodal points $\pm x'$.
This suggests that we should not associate all points of $G_i$ to a single point $p_i \in S^1$, but rather spread the points from $G_i$ across the cell $F_i\subseteq S^1$.

We will achieve this using an embedding $\gamma \colon S^1 \to S^k$, which takes each Voronoi cell $F_i$ in $S^1$ homeomorphically to a diameter of a Voronoi cell $G_i$ in $S^k$.
The embedding of $F_i$ into $G_i$ induces a natural correspondence, defined by sending $x \in G_i$ to $y \in F_i$, where $\gamma(y)$ is the point on $\gamma(F_i)$ which is nearest to $x$.
In the following sections we describe the details of the construction, showing that for an appropriately-chosen embedding $\gamma$, projecting Voronoi cells in $S^k$ onto segments of $\gamma$ yields a correspondence $\Rk$ with the desired distortion.

\paragraph{Notation and definition of $\Rk$.}

We first fix a finite antipodal set $P = \{\pm p_1,\ldots, \pm p_{k+1}\}\subseteq S^1$ where the $p_i$ appear counterclockwise and evenly spaced with alternating signs, starting at $p_1 = e_1$ (see \Cref{fig:notation}).
In $S^k$, we fix $Q = \{\pm q_1, \ldots, \pm q_{k+1}\}\subseteq S^k$, where $q_i = e_i$.
As usual, the Voronoi cells in $S^1$ associated to the $\pm p_i$ are denoted by $\pm F_i$, and similarly the cells in $S^k$ associated to $\pm q_i$ are denoted by $\pm G_i$.
We will frequently use the fact that $G_i$ consists of the points where the $i$-th coordinate is positive and largest in magnitude among all coordinates.

The correspondence $\Rk$ is constructed with respect to a certain embedding $\gamma \colon S^1 \to S^k$, whose image consists of $2k+2$ geodesic arcs in $S^k$, such that each arc is the diameter of a Voronoi cell.
The curve $\gamma$ crosses the  cells in a specific way, and as such, it is convenient to adopt a new notation for the cells $F_i$ and $G_i$, whose linear ordering reflects the traversed order of the cells.
We order the $\pm F_i$ as $\mathcal F_1, \mathcal F_2, \ldots, \mathcal F_{2k+2}$, according to the rule $\mathcal F_m = (-1)^{m+1} F_m$, where the subscript on $F_m$ is understood modulo $k+1$.
Note that this linear order depends on $k$ being odd---for example $\mathcal F_{k+2} (-1)^{k+2}F_1 = -F_1$, whereas if $k$ were even we would have repeated $F_1$ in the linear order and missed $-F_1$.

Explicitly, the linear order on the $\pm F_i$ is as shown below:\begin{align*}
    F_1, -F_2, F_3, -F_4, \ldots, -F_{k+1}, -F_1, F_2, \ldots , F_{k+1}.
\end{align*}
Similarly, we linearly order the $\pm G_i$ as $\mathcal G_1,\mathcal G_2,\ldots, \mathcal G_{2k+2}$, where $\mathcal G_m = (-1)^{m+1} G_m$, and again the latter subscript is understood modulo $k+1$.
We can describe each interval $\mathcal F_m$ explicitly:\[
\Q_m = \bigg[\frac{(2m-3)\pi}{2k+2}, \ \frac{(2m-1)\pi}{2k+2}\bigg].
\]
\Cref{fig:notation} shows the points and cells in $S^1$ when $k=3$.

\begin{figure}[h!t]
    \centering
    \includegraphics{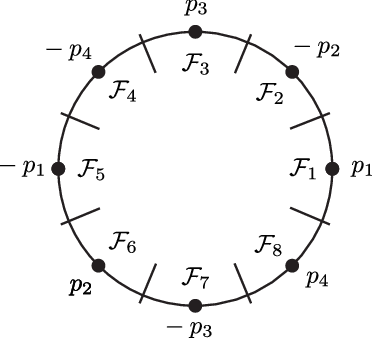}
    \caption{The points $\pm p_1,\pm p_2,\pm p_3, \pm p_4$ in $S^1$, and the linear order on their Voronoi cells.}
    \label{fig:notation}
\end{figure}

Postponing the explicit description of $\gamma$, we instead directly define the functions that result from projecting the various $\mathcal G_m$ onto $\gamma$.
For any $1\le m \le 2k+2$, we define $f_m \colon \Pc_m \subseteq \R^{k+1} \to S^1$ by
\begin{align*}
f_m(x_1,\dots,x_{k+1}) = \frac{(m-1)\pi}{k+1} \ + \ \frac{\pi}{2k(k+1)} \cdot \frac{x_1+\cdots+x_{m-1}-x_{m+1}-\cdots-x_{k+1}}{x_m},
\end{align*}
where the subscripts of $x$ in the latter quotient are considered modulo $k+1$.
Note that since $|x_m| \geq x_i$ for all $i$, this quotient of $x_i$ terms maps onto the interval $[-k,k]$, and so $f_m$ describes a correspondence between $\Pc_m$ and $\Q_m$, as desired.  We also note that the collection of functions $f_m$ exhibits certain natural symmetries, for example, $f_m(x)=-f_{m+k+1}(-x)$ (recall here that $\mathcal G_m = -\mathcal G_{m+k+1}$).
In Section \ref{sec:reduction} we also discuss a certain natural cyclic action compatible with the functions $f_m$ and their domains $\mathcal G_m$.

\begin{definition}\label{def:Rk}
With the notation given above, the correspondence $\Rk$ between $S^k$ and $S^1$ is
\[
\Rk \eqdef \left\{(x,f_m(x)) \in S^k \times S^1 \,\middle\vert\,  x \in \Pc_m\right\}.
\]
\end{definition}

\noindent We note that the correspondence $\Rk$ is not a function: if $x$ lies on the boundary of two or more Voronoi cells, then $x$ is in the domain of multiple $f_m$, and hence corresponds to multiple elements of $S^1$.
In fact, at the beginning of this section, we saw that these boundary points caused undesirable distortion bounds for the correspondence $R_{P,Q}$.
The next example provides evidence that the correspondence $\Rk$ avoids this issue.

\begin{example}
\label{ex:maxdist}
    Let $x \in \Pc_1 \cap \Pc_{k+1}$.  Then $x_1 = -x_{k+1}$, and we compute
    \begin{align*}
    f_{k+1}(x) - f_1(x) & = \frac{k\pi}{k+1} + \frac{\pi}{2k(k+1)} \bigg( \frac{x_1+\cdots+x_k}{x_{k+1}} - \frac{-x_2-\cdots-x_{k+1}}{x_1}\bigg) 
    \\
    & = \frac{k\pi}{k+1} + \frac{\pi}{2k(k+1)} \bigg( -\frac{x_1+\cdots+x_k}{x_1} - \frac{-x_2-\cdots-x_k+x_1}{x_1}\bigg)
    = \frac{(k-1)\pi}{k},
    \end{align*}
the desired distortion bound for $\Rk$.
\end{example}

Our goal in the remainder of this section is to prove that this quantity is exact.

\begin{theorem}
\label{thm:Rdist}
    The distortion of $\Rk$ is equal to $\tfrac{(k-1)\pi}{k}$.
\end{theorem}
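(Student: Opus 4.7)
The lower bound $\dis(\Rk) \ge \tfrac{(k-1)\pi}{k}$ is immediate from \Cref{ex:maxdist}. The goal is to prove the matching upper bound: for every pair of elements $(x, f_m(x)), (x', f_n(x')) \in \Rk$,
\[
\bigl| d_{S^k}(x, x') - d_{S^1}(f_m(x), f_n(x')) \bigr| \le \frac{(k-1)\pi}{k}.
\]
The strategy is to first compress the case space using symmetry, then verify the bound in the remaining cases via direct computation with the explicit formula for $f_m$.

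The symmetry reductions rely on two ingredients. First, the antipodal relation $f_m(x) = -f_{m+k+1}(-x)$ noted in the excerpt, together with $-\Pc_m = \Pc_{m+k+1}$, implies that replacing $x'$ by $-x'$ sends $d_{S^k}(x,x') \mapsto \pi - d_{S^k}(x,x')$ and $d_{S^1}(f_m(x), f_n(x')) \mapsto \pi - d_{S^1}(f_m(x), f_n(x'))$, so the absolute difference is preserved. This allows us to restrict to a sign convention on $x'$ that effectively halves the range of $n$. Second, the linear ordering on the $\Pc_m$ is compatible with a cyclic isometry of $S^k$ (a coordinate cycling combined with a sign flip, which is what produces the alternating signs in $\Pc_m = (-1)^{m+1}G_m$), whose effect on $\Rk$ corresponds, on the $S^1$ side, to a rotation by $\tfrac{\pi}{k+1}$. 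This permits us to fix $m = 1$ throughout, reducing the task to bounding the distortion contribution from pairs of the form $(x, f_1(x)), (x', f_n(x'))$ for a short list of indices $n$.

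With $m=1$ fixed I would parameterize $x \in \Pc_1$ by ratios $r_i = x_i/x_1 \in [-1,1]$ (and similarly $x' \in \Pc_n$), noting that $f_1(x)$ is affine in these ratios while $\langle x, x'\rangle$ is a quotient of quadratic forms. The case analysis then splits as follows: when $n = 1$, both distances are small and the inequality reduces to $\vdiam(Q) = \arccos\bigl(-\tfrac{k-1}{k+1}\bigr) \le \tfrac{(k-1)\pi}{k}$, which is \Cref{lem:calc} combined with \Cref{lem:crosspolytope}, together with $d_{S^1}(f_1(x), f_1(x')) \le \tfrac{\pi}{k+1}$; when $\Pc_n$ is adjacent to $\Pc_1$ along the specific facet $x_1 = -x_{k+1}$ (so $n = k+1$, or its symmetric counterpart), the analysis of \Cref{ex:maxdist} extends to a bound over the full shared facet, giving the sharp value $\tfrac{(k-1)\pi}{k}$; and for all remaining values of $n$ (including short-range adjacencies like $n=2$, where the naive computation yields a distortion of at most $\tfrac{2(k-1)\pi}{k(k+1)}$), one shows that neither $d_{S^k}(x,x')$ nor $d_{S^1}(f_1(x), f_n(x'))$ approaches its extreme value closely enough to push their difference over $\tfrac{(k-1)\pi}{k}$.

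The main obstacle is this last collection of intermediate cases. Because $f_m$ is a \emph{ratio} of linear forms rather than a linear function, a uniform Lipschitz estimate is unavailable, and a direct bound must exploit the geometric role of $f_m$: it projects points in $\Pc_m$ along level sets of normalized coordinate ratios onto the geodesic diameter $\gamma(\Q_m)$, which has length $\vdiam(Q) = \arccos\bigl(-\tfrac{k-1}{k+1}\bigr)$ in $S^k$ but whose preimage $\Q_m$ has length $\tfrac{\pi}{k+1}$ in $S^1$. This mismatch in scales is exactly what creates the potential distortion, and a precise accounting---via trigonometric identities in the spirit of \Cref{lem:calc} and algebraic identities in the spirit of \Cref{ex:maxdist}---should yield the required inequality in each intermediate case, completing the proof.
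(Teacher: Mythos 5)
Your skeleton---lower bound from \Cref{ex:maxdist}, cyclic and antipodal symmetries to reduce to $m=1$, and elimination of far-apart pairs of cells by crude range bounds---matches the paper's reduction (\Cref{prop:casered}). But the proposal has a genuine gap exactly where the work lies. After the reduction the surviving cases are $n\in\{2,3,k,k+1\}$, with $n=2,3$ equivalent to $n=k+1,k$ under the antipodal symmetry, and these are precisely the cases your sketch does not prove. Your parenthetical claim that a naive computation bounds the $n=2$ distortion by $\tfrac{2(k-1)\pi}{k(k+1)}$ is false: take $x$ the all-positive corner $\tfrac{1}{\sqrt{k+1}}(1,\dots,1)\in\Pc_1$ and $z=-x$, which lies in $\Pc_2=-G_2$; then $d_{S^k}(x,z)=\pi$ while $f_1(x)=-\tfrac{\pi}{2(k+1)}$ and $f_2(z)=\tfrac{(k+2)\pi}{2k(k+1)}$, so $d_{S^1}(f_1(x),f_2(z))=\tfrac{\pi}{k}$ and the distortion at this pair is exactly $\tfrac{(k-1)\pi}{k}$. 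So $n=2$ is the hard case in disguise, not an intermediate one where ``neither distance approaches its extreme value.'' Likewise, asserting that the analysis of \Cref{ex:maxdist} ``extends to the full shared facet'' for $n=k+1$ is not an argument: the example evaluates the distortion only at points $x=z\in\Pc_1\cap\Pc_{k+1}$ (where the $S^k$ distance is zero), whereas the theorem requires a uniform bound over all pairs $x\in\Pc_1$, $z\in\Pc_{k+1}$, and nothing in your sketch supplies it.

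What is missing is the analytic core of the paper's proof. For $j=k$ and $j=k+1$ the required inequality is first converted into an algebraic one comparing a sum of coordinate ratios with the Euclidean distance $\sqrt{\langle x-z,x-z\rangle}$ (\Cref{lem:simplerform}, using that chordal distance is below geodesic distance), and then established via norm-comparison/Cauchy--Schwarz and parallelogram-area estimates together with the coordinate bounds of \Cref{lem:eucldistbound}. Moreover this generic estimate fails at $k=3$: the paper needs the additional structural facts that each $f_m$ is strictly distance-decreasing (\Cref{lem:distdec}) and hence that the maximum of each distortion function occurs only at boundary points of the cells (\Cref{prop:maxdist}) to close the $j=k+1$, $k=3$ case. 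None of this appears in, or follows routinely from, your outline---the sentence ``should yield the required inequality in each intermediate case'' is where the proof actually has to happen---so as written the proposal establishes only the lower bound and the easy reductions, not \Cref{thm:Rdist}.
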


\paragraph{Geometric description of $\Rk$.}
Before proceeding to the proof of \Cref{thm:Rdist}, we take a short detour to offer a geometric interpretation of the correspondence $\Rk$, in particular, to see how $\Rk$ arises by projecting the Voronoi cells $\mathcal G_m$ onto a particular embedding $\gamma \colon S^1 \to S^k$.

\begin{remark}
Facundo M\'emoli and Zane Smith~\cite{memolismith24} have explored similar ideas in the form of ``embedding projection correspondences'', which arise by projecting points in a metric space to the nearest point on an embedded copy of another metric space. 
This strategy seems to have broad utility, and has yielded promising experimental results including for correspondences between $S^1$ and $S^k$ arising from piecewise geodesic embeddings $S^1\to S^k$ (``cartoonizations''). 
We first learned of these correspondences during our work on the Gromov--Hausdorff, Borsuk--Ulam, Vietoris--Rips polymath project~\cite{dghpolymath}.
A source of inspiration for our correspondence $\Rk$ were the embedding-projection correspondences of M\'emoli and Smith~\cite{memolismith24}. In fact, $\Rk$ is actually a variant of an embedding-projection correspondence, with the caveat that our nearest-point projection is carefully restricted to Voronoi cells.

\end{remark}

The curve $\gamma$ consists of $2k+2$ geodesic arcs, each of which crosses a diameter of a unique Voronoi cell $\mathcal G_i$.
The corners of the Voronoi cells occur at the $2^{k+1}$ points of $S^k$ where all coordinates have magnitude $\frac{1}{\sqrt{k+1}}$.
We use vectors of $k+1$ signs to denote these vertices---for example, $++-+$ denotes the vertex $\left(\tfrac{1}{2}, \tfrac{1}{2}, -\tfrac{1}{2}, \tfrac{1}{2}\right)$ in $S^3$.
Now $\gamma$ starts from $++\cdots +$ at time $-\frac{\pi}{2k+2}$ (the left endpoint of $\mathcal F_1$), and it crosses Voronoi cells $\mathcal G_i$ in the order indicated by the index, progressing continuously across diameters. 
Note that two diametrically opposite corners of cell $\pm G_i$ have opposite signs in all but the $i$-th coordinate, and so the initial trajectory of $\gamma$ completely determines the $2k+2$ vertices which are visited by $\gamma$.
For example, when $k=3$, the curve $\gamma\colon S^1\to S^3$ visits corners in the following order:
\begin{align*}
    ++++&\ \ \to\ \ \ +---\ \ \to\ \ \ --++\ \ \to\ \ \ +++-\ \ \to\ \ \ ----\\
    &\ \ \to \ \ \ -+++\ \ \to\ \ \ ++--\ \ \to\  \ \ ---+\ \ \to\  \ \ ++++.
\end{align*}
We emphasize here that this construction depends on $k$ being odd: if $k$ were even, then starting from $++\cdots +$ and switching all but one sign $k+1$ times, we would return prematurely to $++\cdots +$ after time $\pi$.  Since $k$ is odd, $\gamma$ instead arrives at $--\cdots -$ after time $\pi$ and continues in a $\Z/2$-equivariant manner: $\gamma(-x) = -\gamma(x)$.

An important caveat is that the curve $\gamma$ does not move at constant speed.
Instead, we define $\gamma$ so that it moves at constant speed in $\R^{k+1}$ \emph{after} $\pm G_i$ is centrally projected outward to a facet of the cube $[-1,1]^{k+1}$, via the map $x \mapsto \frac{1}{|x_i|}x$.
With this definition of $\gamma$, we obtain an alternative definition for $\Rk$, defined by associating every point $x \in \mathcal G_m$  to the nearest point on the segment $\gamma(\mathcal F_m)$. 
Indeed, while this geometric description is not necessary for the distortion computation, it can be readily checked that $f_m \colon \mathcal G_m \to S^1$ is obtained by centrally projecting  $\mathcal G_m$ to a facet of the cube $[-1,1]^k$, and then linearly projecting this facet to the diameter $\gamma(\mathcal F_m)$ (see Figure \ref{fig:cube}).
In this way, each point $y\in \mathcal F_m$ is associated to a fiber of the nearest-point map from $\mathcal G_m$ to $\gamma(\mathcal F_m)$---this fiber is a portion of a great $(k-1)$-sphere in $S^k$, orthogonal to $\gamma(\mathcal F_m)$ at the point $\gamma(x)$.

\begin{figure}[h!t]
\centering
\includegraphics{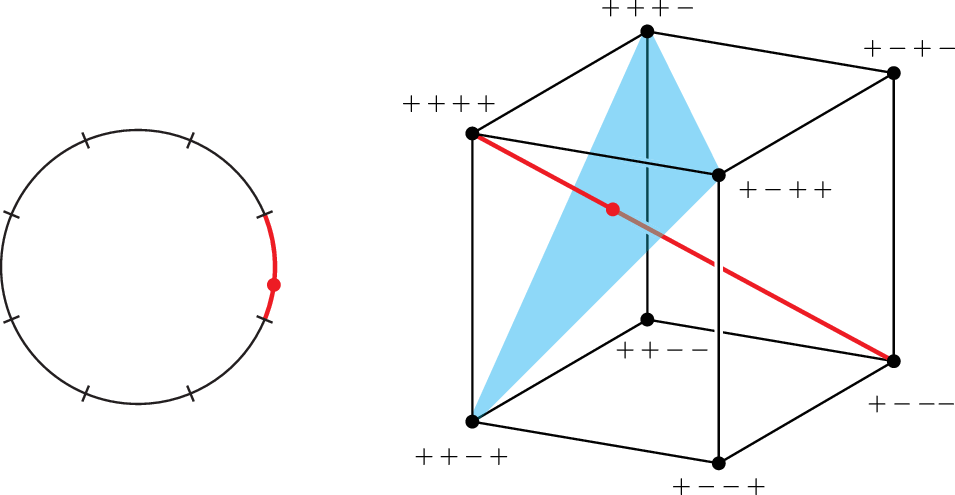}
\caption{(Left) The interval $\mathcal F_1 = \left[-\frac{\pi}{8},\frac{\pi}{8}\right]$ in $S^1$, containing the point $-\frac{\pi}{24}$. 
(Right) The Voronoi cell $\mathcal G_1 \subseteq S^3$, consisting of points whose first coordinate is positive and maximum in magnitude, after central projection to the hyperplane $x_1 = 1$ in $\R^4$.
The bold red diameter is the central projection of $\gamma(F_1)$, and the transparent blue triangle is the set of points that are closest to the point $\gamma\left(-\frac{\pi}{24}\right)$ which lies one third of the distance from $++++$ to $+---$.
The blue points in $S^3$ all correspond to $-\frac{\pi}{24}\in S^1$.}
\label{fig:cube}
\end{figure}

\begin{example}
\label{ex:comps}
In this example we again specialize to the case $k = 3$ and we discuss how the value $\tfrac{2\pi}{3}$ (the claimed distortion) arises naturally here.
Eight of the sixteen vertices in $S^3$ lie on the curve $\gamma$.
Consider $x = (\frac12, -\frac12,\frac12,\frac12)$, which does not lie on the curve $\gamma$.
By checking the signs of the components, we see that $x$ is a corner of the cells $G_1 = \Pc_1$, $-G_2 = \Pc_2$, $G_3 = \Pc_3$, and $G_4 = \Pc_8$, and $-x$ is a corner of $\Pc_4$, $\Pc_5$, $\Pc_6$, and $\Pc_7$.  
By the definition of $\Rk$, we can compute the four correspondents of $x$ using the various functions $f_m$.
For example, one correspondent is \[
f_1(x) = \frac{\pi}{24}\cdot \frac{\tfrac{1}{2}-\tfrac{1}{2}-\tfrac{1}{2}}{\tfrac{1}{2}} = -\frac{\pi}{24}
\]
Computing the remaining correspondents, we find that $x$ corresponds to the four points $-\tfrac{\pi}{24}$,$\tfrac{7\pi}{24}$,$\tfrac{11\pi}{24}$,$\tfrac{43\pi}{24}$, and $-x$ corresponds to the four points $\tfrac{19\pi}{24}$,$\tfrac{23\pi}{24}$,$\tfrac{31\pi}{24}$,$\tfrac{35\pi}{24}$.
The correspondents of $x$ are pairwise closer than $\tfrac{2\pi}{3}$, and the correspondents of $-x$ are pairwise closer than $\tfrac{2\pi}{3}$, whereas each correspondent of $x$ lies at least $\tfrac{\pi}{3}$ away from each correspondent of $-x$.

For a qualitatively distinct example, we consider a Voronoi vertex which does lie on the curve $\gamma$, such as $x' = (\frac12,\frac12,\frac12,\frac12)$.  We find that the three correspondents of $x$, together with the three correspondents of $-x$, form a set of six equally spaced points on $S^1$.
\Cref{fig:comps} shows these two situations.
\end{example}

\begin{figure}[h!t]
\centering
\includegraphics{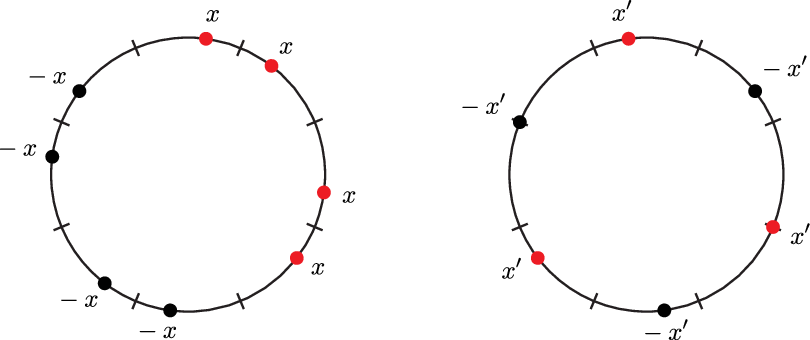}
\caption{Correspondents of the points $\pm x$ (left) and $\pm x'$ (right) described in \Cref{ex:comps}.}
\label{fig:comps}
\end{figure}

\subsection{The distortion of $\Rk$: initial steps}
\label{sec:reduction}

In the following sections, we will study the distortion of $\Rk$ by explicitly analyzing the behavior of the functions $f_m$.
Much of this analysis can be simplified by appealing to certain natural symmetries exhibited by these functions.
After discussing these geometric properties of $f_m$, we demonstrate how these symmetries lead to a significant case reduction in the proof of Theorem \ref{thm:Rdist}.

\paragraph{Symmetries and geometric properties of the $f_m$.}
The functions $f_m$ and their domains $\Pc_m$ are compatible with a certain natural cyclic action, which generalizes the aforementioned $\Z/2$-equivariance $f_m(x) = -f_{m+k+1}(-x)$.
Define the isometry
\[
A_1 \colon \R^{k+1} \to \R^{k+1} \colon (x_1,\dots,x_{k+1}) \mapsto (x_{k+1},-x_1,\dots,-x_k),
\]
and note that $A_1$ takes $\Pc_m$ to $\Pc_{m+1}$, where the subscript on $\mathcal G_{m+1}$ is  taken modulo $2k+2$.
Let $A_n$ refer to the $n$-fold composition $A_1 \circ \cdots \circ A_1$; for example, $A_{k+1}$ is the antipodal map, and $A_{2k+2}$ is the identity.
We have the following lemma, immediate from the definitions of $\Pc_m$ and $f_m$, which we record for future use.
\begin{lemma}
    \label{lem:cyclic}
    For each $n$, $A_n$ takes $\Pc_m$ to $\Pc_{m+n}$, and for $x \in \Pc_m$, $f_m(x) = \frac{n\pi}{k+1} + f_{m+n}(A_n (x))$.
\end{lemma}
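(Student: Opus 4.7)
The plan is to prove the lemma by induction on $n$, with essentially all of the content concentrated in the base case $n=1$.

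For the base case, I would first verify the cell-permutation claim $A_1(\mathcal G_m) = \mathcal G_{m+1}$. Since $A_1$ is a signed permutation, it is an orthogonal (isometric) transformation of $\R^{k+1}$, and it sends $e_i \mapsto -e_{i+1}$ for $1 \le i \le k$ and $e_{k+1} \mapsto e_1$. Comparing against the explicit list of centers of $\mathcal G_1, \ldots, \mathcal G_{2k+2}$, namely $+e_1, -e_2, +e_3, \ldots, -e_{k+1}, -e_1, +e_2, \ldots, +e_{k+1}$, one sees directly that $A_1$ sends the center of $\mathcal G_m$ to the center of $\mathcal G_{m+1}$, with indices taken modulo $2k+2$. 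Since an isometry that permutes a set of Voronoi sites also permutes the corresponding Voronoi cells, this yields $A_1(\mathcal G_m) = \mathcal G_{m+1}$.

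For the arithmetic identity, I would substitute $y := A_1(x) = (x_{k+1}, -x_1, \ldots, -x_k)$ into the formula defining $f_{m+1}(y)$ and simplify directly. The denominator becomes $y_{m+1} = -x_m$, and the signed sum $y_1 + \cdots + y_m - y_{m+2} - \cdots - y_{k+1}$ in the numerator reduces, after tracking the sign flips introduced by $A_1$, to $-(x_1 + \cdots + x_{m-1} - x_{m+1} - \cdots - x_{k+1})$. The two minus signs then cancel in the quotient, so the rational parts of $f_m(x)$ and $f_{m+1}(A_1(x))$ agree exactly. The constant terms differ by $\tfrac{m\pi}{k+1} - \tfrac{(m-1)\pi}{k+1} = \tfrac{\pi}{k+1}$, which is the shift required by the lemma. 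This establishes the base case.

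For the inductive step, I would write $A_n = A_1 \circ A_{n-1}$ and apply the inductive hypothesis $f_m(x) = \tfrac{(n-1)\pi}{k+1} + f_{m+n-1}(A_{n-1}(x))$ together with the base case applied to the point $A_{n-1}(x) \in \mathcal G_{m+n-1}$, composing the two shifts of $\tfrac{(n-1)\pi}{k+1}$ and $\tfrac{\pi}{k+1}$ to obtain the desired total shift of $\tfrac{n\pi}{k+1}$.

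The main obstacle is not conceptual but rather bookkeeping. For $m > k+1$, the formula defining $f_m$ must be interpreted with coordinate subscripts reduced cyclically modulo $k+1$, and the sign $(-1)^{m+1}$ appearing implicitly in $\mathcal G_m = (-1)^{m+1} G_{m \bmod (k+1)}$ interacts nontrivially with this cyclic reading. A useful consistency check is that $A_{k+1}$ equals the antipodal map $x \mapsto -x$, so the $n=k+1$ specialization of the lemma should reduce to the $\Z/2$-equivariance $f_m(x) = -f_{m+k+1}(-x)$ on $S^1$ already noted in the paper. Once sign conventions are fixed consistently across the wrap-around at $m=k+1$, both the base case and the inductive step reduce to short algebraic verifications.
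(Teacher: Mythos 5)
Your proposal is correct and is exactly the verification the paper leaves implicit (it records the lemma as ``immediate from the definitions''): $A_1$ is an isometry that cyclically permutes the sites $\pm e_i$ and hence the cells $\Pc_m$, and direct substitution shows the rational parts of $f_m(x)$ and $f_{m+1}(A_1(x))$ coincide while the constant terms shift by $\tfrac{\pi}{k+1}$, after which general $n$ (and the wrap-around past $m=k+1$, which does work out under the natural modular conventions, with values taken mod $2\pi$) follows by composition. One caveat: your computation actually yields $f_{m+n}(A_n(x)) = f_m(x) + \tfrac{n\pi}{k+1}$ --- for instance $f_1(e_1)=0$ while $f_2(A_1(e_1))=f_2(-e_2)=\tfrac{\pi}{k+1}$ --- so the rotation goes in the direction opposite to the displayed identity as literally written; when you say the constants differ by ``the shift required by the lemma'' you are matching a sign slip in the statement itself rather than an error in your argument, and this is harmless because every later use (e.g.\ in \Cref{prop:casered}) only needs that $f_m$ and $f_{m+n}\circ A_n$ differ by a common rotation of $S^1$.
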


Next, we show that each function $f_m$ is strictly distance-decreasing.

\begin{lemma}
\label{lem:distdec}
    Each $f_m$ is distance-decreasing.  That is, for $x \neq y \in \Pc_m$, $d_{S^1}(f_m(x),f_m(y)) < d_{S^3}(x,y)$.
\end{lemma}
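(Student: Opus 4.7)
My plan is to first use the cyclic symmetry of Lemma \ref{lem:cyclic} to reduce to the case $m=1$: since $A_{m-1}$ is a Euclidean isometry carrying $\mathcal{G}_1$ onto $\mathcal{G}_m$, and $f_m\circ A_{m-1}$ differs from $f_1$ by a constant, it suffices to show that $f_1\colon \mathcal{G}_1\to S^1$ is strictly distance-decreasing. On $\mathcal{G}_1$ one has $x_1 \ge |x_i|$ for all $i$, so in particular $x_1 \ge 1/\sqrt{k+1} > 0$; hence $f_1$ is smooth on an open neighborhood of $\mathcal{G}_1$. Moreover, $\mathcal{G}_1$ is geodesically convex (being an intersection of closed hemispheres), so proving strict distance-decrease reduces to bounding the supremum of the tangential gradient $\|\nabla_T f_1\|$ on $\mathcal{G}_1$ by a constant strictly less than $1$.

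To carry out the bound, I will compute $\nabla f_1$ directly from the explicit formula. A short calculation gives
\[
\nabla f_1(x) = \frac{\pi}{2k(k+1)}\left(\frac{x_2+\cdots+x_{k+1}}{x_1^2},\, -\frac{1}{x_1},\, \ldots,\, -\frac{1}{x_1}\right),
\]
and the key algebraic miracle is that $\langle \nabla f_1(x),\, x\rangle = 0$, so $\nabla f_1$ is already tangent to $S^k$ and equals $\nabla_T f_1$. After introducing the affine coordinates $y_i = x_i/x_1$ (so $|y_i|\le 1$ on $\mathcal{G}_1$) and the quantities $s = \sum_{i\ge 2} y_i$ and $r^2 = \sum_{i\ge 2} y_i^2$, using $\sum x_i^2 = 1$ to eliminate $x_1$, I obtain
\[
\|\nabla_T f_1(x)\|^2 = \Big(\tfrac{\pi}{2k(k+1)}\Big)^2 (s^2+k)(1+r^2).
\]
Two applications of Cauchy--Schwarz finish it: $s^2 \le k r^2$ gives $s^2+k \le k(r^2+1)$, and $r^2\le k$ gives $r^2+1\le k+1$. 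Multiplying these yields $(s^2+k)(1+r^2) \le k(k+1)^2$, and hence $\|\nabla_T f_1\| \le \tfrac{\pi}{2\sqrt{k}}$.

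For $k\ge 3$ (the only regime relevant to Theorem \ref{thm:1tok} for odd $k$) this supremum equals $\tfrac{\pi}{2\sqrt{k}} < 1$, so $f_1$ is Lipschitz on the geodesically convex set $\mathcal{G}_1$ with constant strictly less than $1$, giving the strict distance-decrease $d_{S^1}(f_1(x),f_1(y)) < d_{S^k}(x,y)$ for $x\neq y$. I anticipate the main (and really only) obstacle is verifying the Cauchy--Schwarz bound with the right numerical constant; everything else is essentially bookkeeping. A small sanity check on the tightness is that equality in both Cauchy--Schwarz steps forces $|y_i|=1$ for every $i$ with a common sign, corresponding precisely to the two diametrically opposite corners of $\mathcal{G}_1$ that lie on $\gamma$---the natural places where the nearest-point projection is ``most stretched.''
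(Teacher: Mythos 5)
Your proposal is correct, but it takes a genuinely different route from the paper. You reduce to $m=1$ via Lemma \ref{lem:cyclic} exactly as the paper does, but then you argue infinitesimally: you compute the Euclidean gradient of $f_1$, observe (via degree-zero homogeneity) that $\langle \nabla f_1(x),x\rangle=0$ so it is already tangential, and bound $\|\nabla_T f_1\|^2 = \bigl(\tfrac{\pi}{2k(k+1)}\bigr)^2(s^2+k)(1+r^2) \le \tfrac{\pi^2}{4k}$ on $\Pc_1$, then integrate along minimal geodesics inside the cell. Your algebra checks out, and the convexity step is sound because $\Pc_1$ contains no antipodal pairs (its diameter is $\arccos\frac{1-k}{k+1}<\pi$), so the unique minimizing geodesic between any two of its points stays in the intersection of hemispheres defining the cell --- you should say this explicitly rather than just ``intersection of closed hemispheres,'' since a hemisphere alone is not geodesically convex for antipodal boundary points, but this is a small finishing touch rather than a gap. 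The paper instead works entirely at the two-point level: it combines $|f_1(x)-f_1(y)|$ into a single fraction, uses $x_1y_1\ge\frac1{k+1}$, the $1$-norm/$2$-norm comparison, and Lagrange's identity to bound by $\sqrt{1-\langle x,y\rangle^2}=\sin(d_{S^k}(x,y))<d_{S^k}(x,y)$, with no calculus or convexity of the cell needed. Your approach buys a slightly stronger statement --- a uniform Lipschitz constant $\tfrac{\pi}{2\sqrt k}<1$ on each cell, with a transparent near-equality analysis at the corners of $\Pc_1$ on $\gamma$ --- at the cost of some Riemannian bookkeeping (smoothness near the cell, tangential projection, geodesic convexity); the paper's argument is more elementary and self-contained. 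Notably, both hinge on the same numerical threshold $\pi<2\sqrt k$, i.e.\ $k\ge 3$, which is exactly the regime where the lemma is used.
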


\begin{proof}
By Lemma \ref{lem:cyclic}, and since each $A_n$ is an isometry, it is enough to argue the case $m=1$.
We compute for $x \neq y$:
\begin{align*}
|f_1(x) - f_1(y)| & = \frac{\pi}{2k(k+1)} \bigg| \frac{x_2 + \cdots + x_{k+1}}{x_1} - \frac{y_2 + \cdots + y_{k+1}}{y_1} \bigg|
\\
& =  \frac{\pi}{2k(k+1)} \bigg| \frac{y_1(x_2+\cdots+x_{k+1}) - x_1(y_2+\cdots+y_{k+1})}{x_1y_1} \bigg|
\\
& <  \frac{1}{\sqrt{k}(k+1)} \bigg|\frac{y_1(x_2+\cdots+x_{k+1}) - x_1(y_2+\cdots+y_{k+1})}{x_1y_1}\bigg|,  \hspace{.1in} \mbox{since } k \geq 3 \Rightarrow \pi < 2\sqrt{k}
\\
& \leq \frac{1}{\sqrt{k}} | y_1(x_2+\cdots+x_{k+1}) - x_1(y_2+\cdots+y_{k+1})|, \hspace{.6in} \mbox{since } x_1y_1 \geq \tfrac{1}{k+1}
\\
& = \frac{1}{\sqrt{k}} | x_2y_1 - x_1y_2 + \cdots + x_{k+1}y_1 - x_1y_{k+1}|
\\
& \leq \frac{1}{\sqrt{k}} \Big(|x_2y_1 - x_1y_2| + \cdots + |x_{k+1}y_1 - x_1y_{k+1}|\Big).
\end{align*}
Let $u$ and $v$ be the $k$-tuples $u = (|x_2y_1-x_1y_2|, \dots, |x_{k+1}y_1-x_1y_{k+1}|)$ and $v=(1,\dots,1)$.
A standard application of Cauchy--Schwarz recovers a well-known comparison between $1$- and $2$-norms:
\begin{align}
\label{eqn:normeq}
\|u\|_1 = \langle u, v \rangle \leq \|u\|_2\|v\|_2 = \sqrt{k}\|u\|_2.
\end{align}
Applying this to the final term of our inequality, we continue:
\begin{align*}
& \frac{1}{\sqrt{k}} \Big(|x_2y_1 - x_1y_2| + \cdots + |x_{k+1}y_1 - x_1y_{k+1}|\Big)
\\
\leq \ & \sqrt{\sum_{2 \leq i \leq k+1} (x_iy_1-x_1y_i)^2}
\\
\leq \ & \sqrt{\sum_{1 \leq i < j \leq k+1} (x_iy_j-x_jy_i)^2}
\\
= \ & \sqrt{1-\langle x, y \rangle^2} \mbox{\hspace{2in} by Lagrange's identity}
\\
= \ & \sin(\arccos(\langle x, y \rangle)
\\
< \ & \arccos(\langle x, y \rangle),
\end{align*}
as desired.
\end{proof}

\begin{remark} The proof of Lemma \ref{lem:distdec} is fairly representative of the techniques used to bound the distortion of $\Rk$, and we will continue to see geometric terms, such as the area $\sqrt{1-\langle x , y \rangle^2}$ of the parallelogram spanned by $x$ and $y$, arise in these comparisons.
A powerful consequence of Lemma \ref{lem:distdec} is that the distortion of $\Rk$ can only be achieved at points of $S^k$ lying on the boundary of two or more cells (see Proposition \ref{prop:maxdist}).
We believe that this observation, combined with a careful analysis of the boundary components, could lead to a full proof of Theorem \ref{thm:Rdist}.
Our current proof relies on this observation only in the case $k=3$, and for $k \geq 5$ we are able to give a more streamlined approach.
\end{remark}

\paragraph{A case reduction.} Here we demonstrate how the symmetries discussed above can be used to simplify the distortion analysis for $\Rk$.
We have shown in Example \ref{ex:maxdist} that the distortion of $\Rk$ is at least $\tfrac{(k-1)\pi}{k}$, and it remains to show that this quantity is an upper bound.
To achieve this, we must show that, for each pair $(i,j)$ with $1\le i \le j \le 2k+2$, the ``distortion function" 
\begin{align*}
D_{i,j} \colon \Pc_i \times \Pc_j \to [0,\pi], \hspace{.3in} D_{i,j}(x,z) \eqdef \big| d_{S^1}\big(f_i(x),f_j(z)\big) - d_{S^k}(x,z)\big|
\end{align*}
is bounded above by $\frac{(k-1)\pi}{k}$. 
That is, we must show that
\begin{align}
\label{eqn:allij}\tag{$\ast$}
    D_{i,j}(x,z) \leq \frac{(k-1)\pi}{k} \hspace{.25in} \mbox{ for all } x\in\Pc_i, z\in\Pc_j.
\end{align}

Using symmetries and some other simple observations, we are able to significantly reduce the number of pairs $(i,j)$ which must be checked.

\begin{proposition}
\label{prop:casered}
For the collection of functions $D_{i,j}$, the following statements hold:
\begin{enumerate}[(a)]
\item $D_{i,j}$ satisfies \emph{(\ref{eqn:allij})} \ $\Longleftrightarrow$ \ $D_{1,j-i+1}$ satisfies \emph{(\ref{eqn:allij})} \ $\Longleftrightarrow$ \ $D_{1,i-j+1}$ satisfies \emph{(\ref{eqn:allij})}.
\item $D_{1,j}$ satisfies \emph{(\ref{eqn:allij})} \ $\Longleftrightarrow$ \ $D_{j,k+2}$ satisfies \emph{(\ref{eqn:allij})} \ $\Longleftrightarrow$ \ $D_{1,k+3-j}$ satisfies \emph{(\ref{eqn:allij})}.
\item For $j \in \left\{1, 4, 5, \dots, k-2,k-1,k+2\right\}$, $D_{1,j}$ satisfies \emph{(\ref{eqn:allij})}.
\end{enumerate}

\end{proposition}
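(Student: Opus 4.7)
The plan is to handle parts (a), (b), and (c) in sequence, exploiting the symmetries of the functions $f_m$ and cells $\Pc_m$. For part (a), the first equivalence follows by applying the isometry $A_{1-i}$ of \Cref{lem:cyclic} (with subscripts understood modulo $2k+2$): this bijects $\Pc_i$ with $\Pc_1$ and $\Pc_j$ with $\Pc_{j-i+1}$, and simultaneously shifts $f_i(x)$ and $f_j(z)$ by the common constant $\tfrac{(1-i)\pi}{k+1}$. Since $A_n$ is an isometry of $S^k$ and the geodesic distance on $S^1$ is invariant under translation, one obtains $D_{i,j}(x,z) = D_{1,\,j-i+1}(A_{1-i}(x),A_{1-i}(z))$, from which the equivalence of (\ref{eqn:allij}) is immediate. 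The second equivalence follows from the trivial identity $D_{i,j}(x,z) = D_{j,i}(z,x)$ combined with the first equivalence.

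For part (b), the second equivalence $D_{j,k+2} \iff D_{1,k+3-j}$ is an immediate instance of part (a). The first equivalence $D_{1,j} \iff D_{j,k+2}$ requires a different symmetry: I would consider the map $(x,z) \mapsto (z,-x)$, which bijects $\Pc_1 \times \Pc_j$ with $\Pc_j \times \Pc_{k+2}$ using $-\Pc_1 = \Pc_{k+2}$. Combining the antipodal identity $f_{k+2}(-x) = -f_1(x)$ (a special case of $f_m(x) = -f_{m+k+1}(-x)$) with $d_{S^k}(z,-x) = \pi - d_{S^k}(x,z)$ and $d_{S^1}(f_j(z),-f_1(x)) = \pi - d_{S^1}(f_j(z),f_1(x))$, a direct calculation yields $D_{j,k+2}(z,-x) = D_{1,j}(x,z)$, as desired.

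For part (c), I would treat the three types of indices separately. The case $j=1$ follows from \Cref{lem:distdec}: since $f_1$ is distance-decreasing, $D_{1,1}(x,z) = d_{S^k}(x,z) - d_{S^1}(f_1(x),f_1(z)) \leq d_{S^k}(x,z) \leq \vdiam_{S^k}(Q) = \arccos\!\left(\tfrac{-(k-1)}{k+1}\right) \leq \tfrac{(k-1)\pi}{k}$, where the final inequality is \Cref{lem:calc}. The case $j=k+2$ reduces to $j=1$ by the antipodal symmetry: writing $z=-w$ with $w \in \Pc_1$, one checks that $D_{1,k+2}(x,z) = D_{1,1}(x,w)$ using the identities above. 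For the intermediate range $j \in \{4,\dots,k-1\}$, the key observation is that $f_1(x) \in [-\tfrac{\pi}{2k+2},\tfrac{\pi}{2k+2}]$ and $f_j(z) \in [\tfrac{(2j-3)\pi}{2k+2},\tfrac{(2j-1)\pi}{2k+2}]$ lie in well-separated subintervals of $S^1$, which forces $d_{S^1}(f_1(x),f_j(z)) \in [\tfrac{(j-2)\pi}{k+1},\tfrac{j\pi}{k+1}]$. Combined with $d_{S^k}(x,z) \in [0,\pi]$, this immediately bounds the distortion by $\tfrac{\max(j,\,k+3-j)\pi}{k+1} \leq \tfrac{(k-1)\pi}{k+1} < \tfrac{(k-1)\pi}{k}$, since both $j$ and $k+3-j$ are at most $k-1$ for $j$ in this range.

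The conceptual content here is relatively light; the principal hazard is the index bookkeeping, especially the interaction of the cyclic shift $A_n$ with the antipodal identifications $\Pc_m = -\Pc_{m+k+1}$. The real difficulty in bounding the distortion of $\Rk$ lies in the remaining cases $j \in \{2,3\}$ (and their partners $j \in \{k,k+1\}$ via the equivalences in (a), (b)), where $\Pc_1$ and $\Pc_j$ are adjacent along $\gamma$ and the elementary cell-separation argument above fails. These cases will require the more delicate pointwise analysis of the functions $f_m$ deferred to the subsequent subsections.
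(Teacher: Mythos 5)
Your proposal is correct and takes essentially the same route as the paper: part (a) via the cyclic isometries of \Cref{lem:cyclic}, part (b) via the antipodal identity $f_1(x) = -f_{k+2}(-x)$, and part (c) via the same bounds, namely $\diam(\Pc_1)\le\tfrac{(k-1)\pi}{k}$ for $j=1$ and the containment of $d_{S^1}(f_1(x),f_j(z))$ in $\bigl[\tfrac{(j-2)\pi}{k+1},\tfrac{j\pi}{k+1}\bigr]$ for $4\le j\le k-1$. The only cosmetic deviations are your invocation of \Cref{lem:distdec} in the $j=1$ case (the paper simply bounds both distances by $\tfrac{(k-1)\pi}{k}$ separately) and your direct antipodal check for $j=k+2$ instead of citing part (b); neither changes the argument.
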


\begin{proof}
The first two items are consequences of certain natural symmetries, which can be used to show that the claimed functions have identical images; the final item comes from a quick observation about the image of $D_{1,j}$ for those $j$ in the hypothesis of the statement. \medskip

\noindent For part (a), we apply Lemma \ref{lem:cyclic} and the fact that each $A_n$ is an isometry.
Indeed, we have
\begin{align*}
D_{1,j-i+1}(x,z) & = \big|d_{S^1}\big(f_1(x),f_{j-i+1}(z)\big) - d_{S^k}(x,z)\big|
\\
& = \big|d_{S^1}\big(f_i(A_{i-1}x),f_j(A_{i-1}z)\big) - d_{S^k}\big(A_{i-1}x,A_{i-1}z\big)\big|
\\
& = D_{i,j}\big(A_{i-1}x,A_{i-1}z\big),
\end{align*}
and a similar argument verifies the statement for $D_{1,i-j+1}$. \medskip

\noindent For part (b), we take advantage of the aforementioned $\Z/2$-equivariance in the collection $f_m$: for $x \in \Pc_1$, $f_1(x) = -f_{k+2}(-x)$ (here, recall that the cell $\Pc_{k+2}$ is the antipode of the cell $\Pc_1$).
Therefore
\begin{align*}
D_{1,j}(x,z) & = \big|d_{S^1}\big(f_1(x),f_{j}(z)\big) - d_{S^k}(x,z)\big|
\\
& = \big| \big( \pi - d_{S^1}(f_{k+2}(-x),f_j(z)) \big) - \big( \pi - d_{S^k}(-x,z) \big) \big|
\\
& = D_{j,k+2}(-x,z),
\end{align*}
establishing the first claimed equivalence.
The second claimed equivalence follows from part (a).  \medskip

\noindent For part (c), when $j = 1$ or $j=k+2$: if $x,z \in \Pc_1$, then
\[
d_{S^1}(f_1(x),f_1(z)) \leq \tfrac{\pi}{k+1} \leq \tfrac{(k-1)\pi}{k}
\]
and
\[
d_{S^k}(x,z) \leq \diam(\Pc_1) = \arccos(\tfrac{1-k}{k+1}) \leq \tfrac{(k-1)\pi}{k}
\]
by Lemma \ref{lem:calc}.
Therefore $D_{1,1}$ satisfies (\ref{eqn:allij}), and by part (b), so does $D_{1,k+2}$.  \medskip

\noindent For part (c), when $4\le j \le k-1$: observe that $d_{S^1}(f_1(x),f_j(z))$ takes values in the interval $\Big[\frac{(j-2) \pi}{k+1}, \frac{j \pi}{k+1}\Big]$, while $d_{S^k}(x,z)$ takes values in the interval $[0,\pi]$.
Therefore the image of $D_{1,j}$ is bounded above by
\[
\max\bigg\{ \frac{j\pi}{k+1}, \frac{(k-j+3)\pi}{k+1} \bigg\}.
\]
When $j \in \left\{4,5,\dots,k-2,k-1\right\}$, each of these terms is bounded above by $\frac{(k-1)\pi}{k+1} \leq \frac{(k-1)\pi}{k}$.
\end{proof}

\subsection{The proof of Theorem \ref{thm:Rdist}}
\label{sec:oddproof}

With the case reduction of the previous section, we will see that we need only prove Theorem \ref{thm:Rdist} analytically in two cases.
We record a few simple bounds to ease the main proof.
Recall that for $z \in \Pc_m$, $|z_m| \geq |z_i|$ for all $i$, and in particular, $|z_m| \geq \tfrac{1}{\sqrt{k+1}}$, where, as usual, we understand subscripts on coordinates modulo $k+1$.

\begin{lemma}
    \label{lem:eucldistbound}
    Let $x \in \Pc_1$ and $z \in \Pc_m$.  
    \begin{enumerate}[(a)]
    \item $(x_1 - |z_m|)^2 \leq (x_1 - z_1)^2 + (x_m - z_m)^2 \leq \langle x - z, x - z \rangle$,
    \item For $k \geq 5$, $1+x_1|z_m| \leq \tfrac{4k^2(k+1)}{\pi^2}x_1^2z_m^2$,
    \item For $k \geq 3$, $\tfrac12+x_1|z_m| \leq \tfrac{4k^2(k+1)}{\pi^2}x_1^2z_m^2$
    \end{enumerate}
\end{lemma}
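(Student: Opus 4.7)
For part~(a), the right-hand inequality is immediate: $\langle x-z, x-z\rangle = \sum_{i=1}^{k+1}(x_i-z_i)^2$ contains the two summands $(x_1-z_1)^2$ and $(x_m-z_m)^2$ together with $k-1$ additional nonnegative terms. The left-hand inequality is the real content, and I plan to handle it by diagonalizing the two sign constraints via a $45^\circ$ rotation in the $(x_1,x_m)$-plane. Setting $u = x_1+x_m$, $v = x_1-x_m$, $s = z_1+z_m$, and $t = z_1-z_m$, the condition $x \in \Pc_1$ gives $u,v \geq 0$, and the condition $z \in \Pc_m$ gives $st = z_1^2-z_m^2 \leq 0$. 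The parallelogram identity rewrites
\[
(x_1-z_1)^2 + (x_m-z_m)^2 = \tfrac{1}{2}(u-s)^2 + \tfrac{1}{2}(v-t)^2,
\]
and one verifies directly that $4(x_1-|z_m|)^2 = (u+v-|s-t|)^2$. Depending on which of $s$ or $t$ is nonpositive, the expression $u+v-|s-t|$ splits as $(u-s)+(v+t)$ or $(u+s)+(v-t)$. Applying $(A+B)^2 \leq 2A^2+2B^2$ and then using $v \geq 0, t \leq 0$ (respectively $u \geq 0, s \leq 0$) to replace $(v+t)^2$ by $(v-t)^2$ (respectively $(u+s)^2$ by $(u-s)^2$) yields the inequality.

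For parts~(b) and (c), I will reduce both claims to a single quadratic inequality in the variable $a \eqdef x_1|z_m|$. Writing $M \eqdef \tfrac{4k^2(k+1)}{\pi^2}$, the statements become $Ma^2 - a - 1 \geq 0$ and $Ma^2 - a - \tfrac{1}{2} \geq 0$ respectively. The key step is to note that $a \in [\tfrac{1}{k+1},\,1]$: the lower bound holds because the coordinate of largest magnitude of any unit vector in $\R^{k+1}$ is at least $\tfrac{1}{\sqrt{k+1}}$, and this applies to both $x_1$ (since $x \in \Pc_1$) and $|z_m|$ (since $z \in \Pc_m$). The quadratic $Ma^2 - a - c$ has its vertex at $a^\ast = \tfrac{\pi^2}{8k^2(k+1)}$, which lies strictly below $\tfrac{1}{k+1}$ whenever $\pi^2 < 8k^2$, i.e.\ for all $k \geq 2$; hence the quadratic is increasing on $[\tfrac{1}{k+1},\,1]$, and it suffices to check the inequality at the left endpoint. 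After clearing denominators, (b) reduces to $4k^2 \geq \pi^2(k+2)$ and (c) reduces to $8k^2 \geq \pi^2(k+3)$. The boundary cases $k = 5$ (for (b)) and $k = 3$ (for (c)) can be checked numerically, and the derivatives $8k - \pi^2$ and $16k - \pi^2$ are positive on the respective ranges, so both polynomial inequalities extend to all larger $k$.

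The main obstacle is the rotation trick in part~(a): once one recognizes that the hypotheses $x \in \Pc_1$ and $z \in \Pc_m$ diagonalize cleanly under the change of variables $(u,v,s,t)$, the remaining estimate is a one-line application of $(A+B)^2 \leq 2A^2+2B^2$. Parts (b) and (c) are more routine and amount to minimizing a parabola on a closed interval and verifying a low-degree polynomial inequality in $k$ at one endpoint.
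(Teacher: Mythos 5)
Your proof is correct, and part (a) follows a genuinely different route from the paper's. The paper handles the left-hand inequality of (a) with a two-line case split on whether $x_1 \geq |z_m|$ or $|z_m| \geq x_1$: in the first case $0 \leq x_1 - |z_m| \leq x_1 - z_1$ (because $z_1 \leq |z_1| \leq |z_m|$), so $(x_1-|z_m|)^2 \leq (x_1-z_1)^2$; in the second case $0 \leq |z_m| - x_1 \leq |z_m| - |x_m| \leq |z_m - x_m|$ (because $|x_m| \leq x_1$), so $(x_1-|z_m|)^2 \leq (x_m-z_m)^2$. Your rotated-coordinates argument --- the parallelogram identity in the $(x_1,x_m)$-plane, the exact identity $4(x_1-|z_m|)^2 = (u+v-|s-t|)^2$, the split on the sign of $s$ or $t$, and the estimates $(A+B)^2 \leq 2A^2+2B^2$ together with $(v+t)^2 \leq (v-t)^2$ (resp.\ $(u+s)^2 \leq (u-s)^2$) --- is valid and checks out in every case, but it buys nothing over the paper's direct term-by-term comparison, which needs no factor-of-two bookkeeping; what your version does make transparent is exactly which sign constraints ($u,v\geq 0$ from $x\in\mathcal G_1$, $st\leq 0$ from $z\in\mathcal G_m$) drive the inequality. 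For (b) and (c) your argument is essentially the paper's in different clothing: the paper divides through by $x_1^2z_m^2$ and bounds $\tfrac{1}{x_1^2z_m^2}+\tfrac{1}{x_1|z_m|} \leq (k+1)^2+(k+1)$ (resp.\ $\tfrac{1}{2x_1^2z_m^2}+\tfrac{1}{x_1|z_m|} \leq \tfrac12(k+1)^2+(k+1)$) straight from $x_1|z_m| \geq \tfrac{1}{k+1}$, which is the same worst-case evaluation at $a = \tfrac{1}{k+1}$ that you perform; dividing by $a^2$ first makes the monotonicity in $a$ obvious, so the paper never needs your vertex computation $a^* = \tfrac{\pi^2}{8k^2(k+1)} < \tfrac{1}{k+1}$. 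Both routes terminate in the same elementary polynomial facts (your $4k^2 \geq \pi^2(k+2)$ for $k\geq 5$ and $8k^2 \geq \pi^2(k+3)$ for $k\geq 3$, versus the paper's slightly lossier chains $(k+1)(k+2) \leq (k+1)\tfrac{4k^2}{\pi^2}$ and $(k+1)(\tfrac{k}{2}+\tfrac32) \leq k(k+1) \leq (k+1)\tfrac{4k^2}{\pi^2}$), so the content there is identical.
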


\begin{proof} For part (a), we consider two cases:
\begin{itemize}
    \item if $x_1 \geq |z_m|$ then $(x_1 - |z_m|)^2 \leq (x_1 - z_1)^2$ since $|z_m| \geq z_1$;
    \item if $|z_m| \geq x_1$, then $(x_1 - |z_m|)^2 \leq (z_m - x_m)^2$ since $x_1 \geq x_m$.
\end{itemize}

\noindent For part (b), since $x_1|z_m| \geq \tfrac{1}{k+1}$, we have
\[
\frac{1+x_1|z_m|}{x_1^2z_m^2} = \frac{1}{x_1^2z_m^2} + \frac{1}{x_1|z_m|} \leq (k+1)^2 + (k+1) = (k+1)(k+2) \leq (k+1)\tfrac{4k^2}{\pi^2},
\]
where the final inequality requires $k \geq 5$. \\

\noindent For part (c), similarly, we have
\[
\frac{1+2x_1|z_m|}{2x_1^2z_m^2} = \frac{1}{2x_1^2z_m^2} + \frac{1}{x_1|z_m|} \leq \tfrac12 (k+1)^2 + (k+1) = (k+1)(\tfrac12 k+ \tfrac32) \leq k(k+1) \leq (k+1)\tfrac{4k^2}{\pi^2},
\]
where the final inequality holds for all $k \geq 3$.
\end{proof}

To prove Theorem \ref{thm:Rdist}, we must show that $D_{i,j}$ satisfies (\ref{eqn:allij}) for all $1\le i\le j \le 2k+2$.
By \Cref{prop:casered}, part (a), it suffices to consider the cases $i=1$ and $1 \leq j \leq k+2$.
By \Cref{prop:casered}, part (c), it suffices to consider $j \in \left\{2, 3, k, k+1\right\}$, and part (b) further reduces this to the two cases $j = k$ and $j = k+1$.
The following lemma gives a sufficient condition for our desired inequality in each of these cases.

\begin{lemma}
\label{lem:simplerform}To show that $D_{1,j}(x,z) \leq \tfrac{(k-1)\pi}{k}$, it suffices to show, \smallskip
\begin{align*}
& \mbox{(a) for } j=k\colon 
&& \frac{\pi}{2k(k+1)}\left(\frac{x_1 +\cdots +x_{k+1}}{x_1} + \frac{z_1+\cdots + z_k - z_{k+1}}{z_k} -2k \right) \le \sqrt{\langle x - z, x - z \rangle}. \\
& \mbox{(b) for } j=k+1\colon \ &&
\frac{\pi}{2k(k+1)}\left(\frac{x_1 +\cdots +x_{k+1}}{x_1} + \frac{z_1+\cdots+z_{k+1}}{z_{k+1}} \right) \leq \sqrt{\langle x-z, x-z \rangle}.
\end{align*}
\end{lemma}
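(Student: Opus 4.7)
The approach is to unpack the absolute value in $D_{1,j}(x,z) = |d_{S^1}(f_1(x), f_j(z)) - d_{S^k}(x, z)|$ and bound each of the two resulting inequalities separately, reserving the analytic condition in the statement for the direction that actually requires it.

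First, since $f_1(x) \in \mathcal F_1$ and $f_j(z) \in \mathcal F_j$ for $j \in \{k, k+1\}$, the signed difference $f_j(z) - f_1(x)$ lies in $\left[\frac{(j-2)\pi}{k+1}, \frac{j\pi}{k+1}\right] \subseteq [0,\pi]$, so $d_{S^1}(f_1(x), f_j(z)) = f_j(z) - f_1(x)$ (there is no ``wrap-around'' issue on $S^1$). Substituting the explicit formulas for $f_1$, $f_k$, $f_{k+1}$ and collecting terms, I would verify the algebraic identity
\[
d_{S^1}\bigl(f_1(x), f_j(z)\bigr) - \tfrac{(k-1)\pi}{k} \ =\ \text{(left-hand side of the claimed inequality)}
\]
in each case. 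This is a routine manipulation: the two expressions appearing in the lemma are simply different repackagings of the same quantity, once one absorbs the constant $\frac{(k-1)\pi}{k}$ into the ``$-2k$'' term (for $j=k$) or the implicit ``$+2$'' term (for $j=k+1$).

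Next, I would invoke the standard chord-versus-arc inequality $\sqrt{\langle x-z, x-z\rangle} = 2\sin(d_{S^k}(x,z)/2) \le d_{S^k}(x,z)$. Combined with the hypothesis of the lemma and the identity above, this immediately yields $d_{S^1}(f_1(x), f_j(z)) - d_{S^k}(x, z) \le \tfrac{(k-1)\pi}{k}$, which is one side of the absolute value. For the reverse side, the crude bounds $d_{S^k}(x,z) \le \pi$ and $d_{S^1}(f_1(x), f_j(z)) \ge \tfrac{(j-2)\pi}{k+1}$ (the minimum geodesic distance between $\mathcal F_1$ and $\mathcal F_j$) give
\[
d_{S^k}(x,z) - d_{S^1}\bigl(f_1(x), f_j(z)\bigr) \ \le\ \tfrac{(k-j+3)\pi}{k+1},
\]
which equals $\tfrac{2\pi}{k+1}$ when $j=k+1$ and $\tfrac{3\pi}{k+1}$ when $j=k$. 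The former is at most $\tfrac{(k-1)\pi}{k}$ for all $k \ge 3$, and the latter for all odd $k \ge 5$, completing the argument in those cases.

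The lone exception is $(j,k) = (k,3)$, where $\tfrac{3\pi}{4} > \tfrac{2\pi}{3}$ and the crude reverse bound is insufficient. This case is inaccessible to the purely analytic scheme of the present lemma and must be handled separately, exploiting the observation (noted in the remark following \Cref{lem:distdec}) that the distortion of $\Rk$ can be attained only at boundary points of the Voronoi cells. This residual boundary case is the only real obstacle; for all other odd $k \ge 5$, the lemma cleanly reduces the remaining distortion work to verifying the two analytic inequalities it states.
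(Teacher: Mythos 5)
Your proposal follows the paper's own route almost exactly: isolate the signed difference $f_j(z)-f_1(x)$ (no wrap-around, since it lies in $\big[\tfrac{(j-2)\pi}{k+1},\tfrac{j\pi}{k+1}\big]\subseteq[0,\pi]$), check the algebraic identity rewriting $f_j(z)-f_1(x)-\tfrac{(k-1)\pi}{k}$ as the displayed left-hand sides (the paper carries this out explicitly for (a) and says (b) is similar), and kill the forward direction of the absolute value with the chord-versus-arc inequality $\sqrt{\langle x-z,x-z\rangle}\le\arccos\langle x,z\rangle$. For the reverse direction you use $d_{S^k}(x,z)\le\pi$ together with a lower bound on the circle distance, which is also exactly what the paper does (phrased there as ``the circle distance is at least $\tfrac{\pi}{k}$'').

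The one divergence is the case $j=k$, $k=3$, and here your diagnosis is in fact sharper than the paper's write-up. Since $f_1$ maps $\Pc_1$ onto $\mathcal F_1$ and $f_k$ maps $\Pc_k$ onto $\mathcal F_k$ with $x$ and $z$ independent, the minimum possible circle distance is $\tfrac{(k-2)\pi}{k+1}$, which for $k=3$ equals $\tfrac{\pi}{4}<\tfrac{\pi}{3}$; so the paper's parenthetical justification fails at precisely the pair of indices where your crude bound $\tfrac{3\pi}{k+1}=\tfrac{3\pi}{4}>\tfrac{2\pi}{3}$ fails. In other words, you have flagged a genuine soft spot that the paper's proof glosses over. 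That said, your attempt does not close this case either: you only gesture at \Cref{prop:maxdist} (via \Cref{lem:distdec}) without carrying out an estimate, and \Cref{lem:simplerform}(a) is invoked for \emph{all} odd $k\ge 3$ in Case~1 of the proof of \Cref{thm:Rdist}, so as a proof of the lemma as stated your argument leaves the reverse inequality for $j=k$, $k=3$ unproven. The missing ingredient is an interplay between $x$ and $z$ that the product-domain bound discards: for instance, for antipodal pairs $x=-z$ with $x\in\Pc_1$, $z\in\Pc_k$, the formulas give $f_k(z)-f_1(x)\ge\tfrac{(k-2)\pi}{k}$ (which is $\tfrac{\pi}{3}$ when $k=3$), so $d_{S^k}-d_{S^1}\le\tfrac{2\pi}{k}$ there; a quantitative version of this near-antipodal analysis, or a restriction to $\partial\Pc_1\times\partial\Pc_k$ in the spirit of Case~3 of the paper's proof, is what is needed to finish the case you deferred.
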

\begin{proof}
For $j=k$ and $j=k+1$, the function $D_{1,j} \colon \Pc_1 \times \Pc_j \to [0,\pi]$ is equal to $|d_{S^1}(f_1(x),f_j(z)) - d_{S^{k}}(x,z)|$.
In both cases, the circle distance is sufficiently large (namely, at least $\tfrac{\pi}{k}$), and it suffices to show that $f_j(z)-f_1(x) - \arccos(\langle x, z\rangle$ is bounded above by $\tfrac{(k-1)\pi}{k}$.
Rearranging, it equivalently suffices to show that 
\[
-\tfrac{(k-1)\pi}{k} + f_j(z) - f_1(x) \leq \arccos(\langle x, z \rangle)
\]
for $j=k$ and $j=k+1$.

For part (a), we compute using the definition of $f_1$ and $f_k$ that $-\tfrac{(k-1)\pi}{k} + f_j(z) - f_1(x)$ is equal to:
\begin{align*}
    & -\frac{(k-1)\pi}{k} + \frac{(k-1)\pi}{k+1} + \frac{\pi}{2k(k+1)}\left(\frac{z_1 + \cdots + z_{k-1} - z_{k+1}}{z_{k}} - \frac{-x_2-x_3-\cdots -x_{k+1}}{x_1}\right)\\
    = \ \ & \frac{(2-2k)\pi}{2k(k+1)}  + \frac{\pi}{2k(k+1)}\left(\frac{z_1 + \cdots + z_{k-1} - z_{k+1}}{z_{k}} + \frac{x_2+x_3+\cdots +x_{k+1}}{x_1}\right)\\
    = \ \ & \frac{\pi}{2k(k+1)}\left(\frac{x_1 +\cdots +x_{k+1}}{x_1} + \frac{z_1+\cdots + z_k - z_{k+1}}{z_k} -2k \right),
\end{align*}
where in the final step we have incorporated part of the constant term into the quotients.
On the other hand, since the Euclidean distance between $x\neq z$ is shorter than the spherical distance, we have  $\sqrt{\langle x-z,x-z\rangle} \le \arccos(\langle x,z\rangle)$, giving the desired result.
A similar computation justifies (b).
\end{proof}

\noindent With the case reduction and setup complete, we are ready to prove \Cref{thm:Rdist}.\medskip

\begin{proof}[Proof of Theorem \ref{thm:Rdist}.] We consider each case from \Cref{lem:simplerform} separately below.
Our argument for the case $j=k$ works for any value of $k$ (Case 1, below). 
For $j=k+1$, we make two separate arguments: one when $k\ge 5$ (Case 2) and one when $k = 3$ (Case 3).\medskip

\noindent \textbf{Case 1:} $j=k$:
Starting with the left side of Lemma \ref{lem:simplerform}(a), we write
\begin{align*}
& \frac{\pi}{2k(k+1)}\left(\frac{x_1 +\cdots +x_{k+1}}{x_1} + \frac{z_1+\cdots + z_k - z_{k+1}}{z_k} -2k \right) 
\\
\leq \ & \frac{\pi}{2k(k+1)} \left(\frac{x_{k+1}}{x_1} - \frac{z_{k+1}}{z_k}\right) &&\text{since \ $\frac{x_i}{x_1}\le 1$ \ and \ $\frac{z_j}{z_k}\le 1$}
\\
= \ & \frac{\pi}{2k(k+1)} \left(\frac{x_{k+1}z_k - x_1z_{k+1}}{x_1z_k} \right)
\\
\leq \ & |x_{k+1}z_k - x_1z_{k+1}| &&\text{since \ $\frac{1}{|x_1z_k|} \leq k+1$.}
\end{align*}
This final quantity is the area of the planar parallelogram spanned by the vectors $(x_1-z_k,x_{k+1}-z_{k+1})$ and $(z_k,z_{k+1})$, and hence is bounded above by the product of side lengths
\begin{align*}
& \sqrt{\big((x_1-z_k)^2 + (x_{k+1}-z_{k+1})^2\big)\big(z_k^2+z_{k+1}^2\big)}
\\
\leq \ & \sqrt{(x_1-z_k)^2 + (x_{k+1}-z_{k+1})^2} && \mbox{since } z_k^2+z_{k+1}^2 \leq 1
\\
\leq \ & \sqrt{(x_1-z_1)^2 + (x_k-z_k)^2 + (x_{k+1}-z_{k+1})^2} &&\mbox{by \Cref{lem:eucldistbound}(a)}
\\
\leq \ & \sqrt{\langle x - z, x - z \rangle}.
\end{align*}
By Lemma \ref{lem:simplerform}(a), this concludes Case 1. \medskip

\noindent \textbf{Case 2: } $j = k+1$ and $k \geq 5$: Starting with the left side of Lemma \ref{lem:simplerform}(b), we write
\begin{align*}
& \frac{\pi}{2k(k+1)}\left(\frac{x_1+\cdots +x_{k+1}}{x_1} + \frac{z_1+\cdots+z_{k+1}}{z_{k+1}} \right)
\\
\leq \ & \frac{\pi}{2k(k+1)}\sum_{i=1}^{k+1}\Big|\frac{x_i}{x_1} + \frac{z_i}{z_{k+1}}\Big|
\\
 \leq \ & \frac{\pi}{2k\sqrt{k+1}}\sqrt{\sum_{i=1}^{k+1}\left(\frac{x_i}{x_1} + \frac{z_i}{z_{k+1}} \right)^2} && \mbox{by comparison of norms; see (\ref{eqn:normeq}) }
 \\
= \ & \frac{\pi}{2k\sqrt{k+1}}\sqrt{\frac{z_{k+1}^2+x_1^2+2x_1z_{k+1}\langle x, z \rangle}{x_1^2z_{k+1}^2}} && \mbox{since } x \mbox{ and } z \mbox{ are unit.}
\end{align*}
Focusing on the numerator of the radicand, we write
\begin{align*}
& z_{k+1}^2+x_1^2+2x_1z_{k+1}\langle x, z \rangle
\\
= \ & (z_{k+1}+x_1)^2-2x_1z_{k+1}+2x_1z_{k+1}\langle x, z \rangle
\\
= \ & (z_{k+1}+x_1)^2-x_1z_{k+1}\langle x-z, x-z \rangle
\\
\leq \ & \langle x-z, x-z \rangle(1-x_1z_{k+1}) && \mbox{by Lemma \ref{lem:eucldistbound}(a)}
\\
\leq \ & \langle x-z, x-z \rangle x_1^2z_{k+1}^2(k+1)\tfrac{4k^2}{\pi^2} && \mbox{by Lemma \ref{lem:eucldistbound}(b), since }  k \geq 5.
\end{align*}
Plugging back into the radicand and simplifying concludes Case 2. \medskip

\noindent \textbf{Case 3: } $j = k+1$ and $k = 3$:
Here we present a slight modification that allows us to push the argument through in the case $k=3$.
This modification relies on the fact that each $f_m$ is distance-decreasing (Lemma \ref{lem:distdec}), from which it follows that the maximum value of each $D_{i,j} \colon \Pc_i \times \Pc_j \to [0,\pi]$ occurs only at boundary points
(see Proposition \ref{prop:maxdist} below).

Assuming the result of Proposition \ref{prop:maxdist}, the final two steps of Case 2 can be modified as follows.
If $x \in \Pc_1 \cap \Pc_i$ for some $i$ and $z \in \Pc_{k+1} \cap \Pc_j$ for some $j$, then $(x_1 - z_1)^2$ and $(x_i-z_i)^2$ are both at least $(x_1+z_{k+1})^2$ by a similar argument to the proof of \Cref{lem:eucldistbound}(a).
Thus $2(x_1+z_{k+1})^2 \leq \langle x - z, x - z \rangle$, and the last three lines of the proof of Case 2 can be replaced by
\begin{align*}
& (z_{k+1} + x_1)^2 - x_1z_{k+1}\langle x-z,x-z\rangle\\
\leq \ & \langle x-z, x-z \rangle(\tfrac12-x_1z_{k+1})\\
\leq \ & \langle x-z, x-z \rangle x_1^2z_{k+1}^2(k+1)\tfrac{4k^2}{\pi^2}
\end{align*}
where the final inequality is due to \Cref{lem:eucldistbound}(c).
\end{proof}

\begin{proposition}
\label{prop:maxdist}
The maximum value of each $D_{i,j} \colon \Pc_i \times \Pc_j \to [0,\pi]$ occurs only at boundary points, i.e. when $x \in \partial \Pc_i$ and $z \in \partial \Pc_j$.
\end{proposition}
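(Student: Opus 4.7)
The plan is to prove \Cref{prop:maxdist} by a perturbation argument driven by the strict distance-decreasing property from \Cref{lem:distdec}. Suppose for contradiction that $(x_0,z_0)\in\Pc_i\times\Pc_j$ attains the maximum of $D_{i,j}$ with $x_0\in\interior(\Pc_i)$ (the case of interior $z_0$ is entirely symmetric), and write $\delta = d_{S^1}(f_i(x_0),f_j(z_0))$ and $\sigma = d_{S^k}(x_0,z_0)$, so that $D_{i,j}(x_0,z_0) = |\delta - \sigma|$. The goal is to show that, unless $(x_0,z_0)$ is already a boundary point or satisfies $D_{i,j}(x_0,z_0) = 0$, a small perturbation of $x_0$ along the unique geodesic through $x_0$ and $z_0$ will strictly increase $|\delta-\sigma|$, contradicting maximality.

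In the generic case $x_0\neq\pm z_0$, the geodesic joining $x_0$ to $z_0$ in $S^k$ is uniquely defined, and for sufficiently small $\epsilon>0$ we may slide $x_0$ by arclength $\epsilon$ along this geodesic---toward $z_0$ if $\delta>\sigma$, or away from $z_0$ if $\sigma>\delta$---keeping the perturbed point $x_0'$ inside $\interior(\Pc_i)$. Such a move changes $d_{S^k}(\,\cdot\,,z_0)$ by exactly $\pm\epsilon$, while \Cref{lem:distdec} guarantees that $d_{S^1}(f_i(x_0'),f_i(x_0)) < \epsilon$, so by the triangle inequality the corresponding change in $d_{S^1}(f_i(\cdot),f_j(z_0))$ has magnitude strictly less than $\epsilon$. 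A direct computation in each sub-case then yields $D_{i,j}(x_0',z_0) > D_{i,j}(x_0,z_0)$, contradicting maximality.

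The main obstacle is the pair of degenerate configurations $x_0 = z_0$ and $x_0 = -z_0$, where the geodesic-perturbation argument breaks down and must be replaced by a direct analysis. If $x_0 = z_0$ then $x_0\in\interior(\Pc_i)\cap\Pc_j$, so the disjointness of interiors of distinct Voronoi cells forces either $i = j$ (whence $\delta=\sigma=0$ and $D_{i,j}(x_0,z_0)=0$) or $z_0\in\partial\Pc_j$, which is already a boundary configuration. If $x_0 = -z_0$, the same disjoint-interior reasoning, together with the antipodal structure $-\Pc_i = \Pc_{i+k+1\bmod (2k+2)}$, forces $j\equiv i+k+1\pmod{2k+2}$; applying \Cref{lem:cyclic} with $n=k+1$ then gives $f_j(-x_0) = f_i(x_0) - \pi$ modulo $2\pi$, so $\delta = \pi = \sigma$ and $D_{i,j}(x_0,z_0)=0$. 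Thus in every degenerate scenario $(x_0,z_0)$ is either already a boundary point or satisfies $D_{i,j}(x_0,z_0) = 0$, the latter being excluded in the applications of the proposition where $\max D_{i,j}>0$, so the reduction to the perturbation argument is complete.
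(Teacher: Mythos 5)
Your proof is correct and takes essentially the same route as the paper: perturb the interior point along the geodesic toward or away from $z$, use \Cref{lem:distdec} together with the triangle inequality to force a strict increase of $D_{i,j}$, and dispose of the coincident/antipodal configurations via \Cref{lem:cyclic} (the paper merely adds a cosmetic reduction to $i=1$ via \Cref{prop:casered}(a)). One small remark: in your $x_0=z_0$ case the alternative ``$z_0\in\partial\Pc_j$ with $j\neq i$'' is in fact vacuous, since a point interior to $\Pc_i$ lies in no other cell, so that case collapses to $i=j$ and $D_{i,j}(x_0,z_0)=0$ exactly as you conclude.
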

\begin{proof}
    By Proposition \ref{prop:casered}(a), it suffices to consider the case $i=1$.
    Suppose that $x$ is an interior point of $\Pc_1$ and $z$ is any point in $\Pc_j$. First observe that if $d_{S^1}(f_1(x),f_j(z)) = d_{S^k}(x,z)$, then $D_{1,j}(x,z) = 0$ is not a maximum value.
    
    Next suppose that $d_{S^1}(f_1(x),f_j(z)) > d_{S^k}(x,z)$.
    This implies that $x$ and $z$ are not antipodal, and we choose $y \in \Pc_1$ lying on the minimal geodesic arc $c$ connecting $x$ to $z$, sufficiently close to $x$ so that $d_{S^1}(f_1(y),f_j(z)) \geq d_{S^k}(y,z)$.
    We compute
    \begin{align*}
    D_{1,j}(y,z) - D_{1,j}(x,z)     & = d_{S^1}(f_1(y),f_j(z)) - d_{S^k}(y,z) - \big(d_{S^1}(f_1(x),f_j(z))  - d_{S^k}(x,z)\big) \\
                        & = d_{S^k}(x,y) + d_{S^1}(f_1(y),f_j(z)) - d_{S^1}(f_1(x),f_j(z)) \\
                        & \geq d_{S^k}(x,y) - d_{S^1}(f_1(x),f_1(y)) && \hspace{-7em} \mbox{ by the triangle inequality,}
                        \\
                        & > 0 && \hspace{-7em}\mbox{ by Lemma \ref{lem:distdec}},
    \end{align*}
    verifying that the value of $D_{1,j}$ increases when $x$ is moved towards $z$.
    
    In fact, we have shown that $d_{S^1}(f_1(y),f_j(z)) > d_{S^k}(y,z)$, which implies that the set of $y$ lying on $c \cap \Pc_1$ and satisfying $d_{S^1}(f_1(y),f_j(z)) \geq d_{S^k}(y,z)$ is nonempty, open, and closed in $c \cap \Pc_1$, hence is equal to $c \cap \Pc_1$.
    Therefore the value of $D_{1,j}$, when restricted to the geodesic arc $c \cap \Pc_1$, is strictly increasing from $x$ to the boundary of $\Pc_1$. 
    
    Now suppose that $d_{S^1}(f_1(x),f_j(z)) < d_{S^k}(x,z)$.
    This implies that $x$ and $z$ are not antipodal, as follows: since $x$ lies in the interior of $\Pc_1$, antipodality would imply that $z = -x$ lies in the interior of $-\Pc_1 = \Pc_{k+2}$, but $D_{1,k+2}(x,-x) = 0$ by Lemma \ref{lem:cyclic}, contradicting our supposition.
    Thus we can choose $y \in \Pc_1$ lying on the minimal geodesic arc connecting $x$ to $-z$; that is, here we move $x$ away from $z$.
    We compute similarly:
    \begin{align*}
    D_{1,j}(y,z) - D_{1,j}(x,z)     & = d_{S^k}(y,z) - d_{S^1}(f_1(y),f_j(z)) - \big(d_{S^k}(x,z) - d_{S^1}(f_1(x),f_j(z))\big) \\
    & = d_{S^k}(x,y) + d_{S^1}(f_1(x),f_j(z)) - d_{S^1}(f_1(y),f_j(z)) \\
                        & \geq d_{S^k}(x,y) - d_{S^1}(f_1(x),f_1(y)) > 0.
    \end{align*}
    By the same argument as above, we conclude that $x$ must be a boundary point of $\mathcal G_1$ to maximize $D_{1,j}(x,z)$.
    A symmetric argument shows that $z$ must be a boundary point of $\Pc_j$.
\end{proof}

\subsection{The proof of Theorem \ref{thm:1tok}}
\label{sec:mainproof}
We are now prepared to complete the proof of Theorem \ref{thm:1tok}.
\begin{proof}[Proof of Theorem \ref{thm:1tok}]
Using the definition of the Gromov--Hausdorff distance (\ref{eq:dgh}), we have
\[
\text{for even } k\colon \hspace{.2in} 2\cdott d_\gh(S^1,S^k) \leq \dis(R_{P,Q}) = \frac{k\pi}{k+1}, \hspace{.5in} \text{by Theorem \ref{thm:1tok-easy}},
\]
and
\[
\text{for odd } k\colon \hspace{.22in} 2\cdott d_\gh(S^1,S^k) \leq \dis(\Rk) = \frac{(k-1)\pi}{k}, \hspace{.4in} \text{by Theorem \ref{thm:Rdist}}.
\]
The fact that these estimates are sharp follows from combining Main Theorem and Theorem 5.1 of~\cite{dghpolymath}.
\end{proof}

\section{General upper bounds: proving \Cref{thm:ntok} and \Cref{thm:packing}}\label{sec:general}

To prove \Cref{thm:ntok}, we will construct point sets in $S^n$ (with $n\ge 2$) with Voronoi diameter at most $\frac{k\pi}{k+1}$, and separation at least $\frac{\pi}{k+1}$. 
These sets will consist of $2(k+1)$ points, matching the number of vertices of the cross-polytope in $S^k$, so that we may apply \Cref{thm:induced-correspondence}. 
It turns out that there is a relatively simple construction: start with the vertices of a cross-polytope in $S^n$, then add the remaining points along the projected edges of the cross-polytope, spacing them as evenly as possible.
The following lemma explains this construction.

\begin{lemma}\label{lem:arcs}
For each $2\le n < k < \infty$, there exists an antipodal set $P\subseteq S^n$ consisting of $2(k+1)$ points, with \[
\vdiam(P)\le \frac{\pi n}{n+1}\quad \text{ and } \quad \sep(P) \ge \frac{\pi}{2\left( \left\lceil \frac{k-n}{n(n+1)}\right\rceil + 1\right)}\ge \frac{\pi}{k-n+3}.\]
\end{lemma}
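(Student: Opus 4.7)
The plan is to construct $P$ by augmenting the cross-polytope. Let $P_0 = \{\pm e_1,\ldots, \pm e_{n+1}\}$ denote the standard cross-polytope in $S^n$, whose edges are geodesic arcs of length $\pi/2$ joining pairs of non-antipodal vertices. There are exactly $n(n+1)$ antipodal pairs of such arcs. Since $2(k+1) - 2(n+1) = 2(k-n)$, I need to add $k-n$ antipodal pairs of points. I distribute them as evenly as possible across the $n(n+1)$ antipodal pairs of arcs, placing the additional points on each arc evenly between its two cross-polytope endpoints. Setting $M \eqdef \lceil (k-n)/(n(n+1))\rceil$, no single arc receives more than $M$ additional points, and when an arc gets $m\le M$ points, these divide the arc into $m+1$ subarcs of length $\pi/(2(m+1)) \ge \pi/(2(M+1))$.

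For the Voronoi diameter bound, the critical observation is a ``monotonicity under refinement'' of Voronoi cells: since $P\supseteq P_0$, every Voronoi cell of $P$ is a subset of a Voronoi cell of $P_0$. Hence
\[
\vdiam_{S^n}(P) \ \le \ \vdiam_{S^n}(P_0) \ = \ \arccos\!\left(\tfrac{-(n-1)}{n+1}\right),
\]
by \Cref{lem:crosspolytope}. For $n\ge 3$ this is at most $\frac{\pi(n-1)}{n} \le \frac{\pi n}{n+1}$ by \Cref{lem:calc}, and for $n=2$ the value $\arccos(-1/3)$ is directly less than $2\pi/3 = \pi n/(n+1)$. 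This takes care of the Voronoi diameter entirely without further case analysis.

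The main work lies in the separation estimate. I want to show $\sep(P)\ge \pi/(2(M+1))$. Between two points on the same arc, the claim is immediate by even spacing. The interesting case is two distinct arcs sharing a common endpoint $v$; parametrizing each arc by arclength $t$ from $v$ writes the two points as $\cos(t)v + \sin(t)u_1$ and $\cos(t')v + \sin(t')u_2$, where $u_1,u_2$ are unit vectors orthogonal to $v$ corresponding to the far endpoints of the two arcs. Since all cross-polytope vertices are mutually orthogonal, $\langle u_1, u_2\rangle = 0$, so the inner product of the two points is simply $\cos(t)\cos(t')$. Assuming both points are additional (so $t,t' \ge \pi/(2(M+1))$), this is bounded above by $\cos^2(\pi/(2(M+1))) \le \cos(\pi/(2(M+1)))$, using $\cos\alpha \le 1$; hence the geodesic distance is at least $\pi/(2(M+1))$. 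If one point is the shared vertex itself, the bound follows from the even spacing on the other arc. Finally, arcs with no common endpoint either lie in orthogonal coordinate planes (giving distance $\pi/2$) or form an antipodal pair with distance close to $\pi$, both comfortably larger than $\pi/(2(M+1))$.

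The second inequality, $\pi/(2(M+1)) \ge \pi/(k-n+3)$, reduces to verifying $M \le (k-n+1)/2$. This is automatic since $n(n+1) \ge 6 \ge 2$ for $n\ge 2$, giving $M = \lceil (k-n)/(n(n+1))\rceil \le \lceil(k-n)/2\rceil \le (k-n+1)/2$. The main obstacle is the case analysis for inter-arc separation; the decisive technical ingredient is the trivial but crucial inequality $\cos^2\alpha \le \cos\alpha$ for $\alpha \in [0,\pi/2]$, which ensures that crossing between arcs at a shared cross-polytope vertex does not degrade separation below its intra-arc value. All remaining cases reduce either to this shared-endpoint situation or to arcs which are essentially orthogonal, where the separation estimate is far from tight.
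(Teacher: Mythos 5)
Your construction is exactly the paper's (the cross-polytope $P_0=\{\pm e_1,\ldots,\pm e_{n+1}\}$ plus up to $M=\lceil (k-n)/(n(n+1))\rceil$ evenly spaced points in the interior of each of the $n(n+1)$ positive edge-arcs, copied antipodally), and your handling of the final inequality $M\le (k-n+1)/2$ is fine. However, two of your justifications are false as stated. First, the ``monotonicity under refinement'' claim---that $P\supseteq P_0$ forces every Voronoi cell of $P$ to lie inside a Voronoi cell of $P_0$---fails for the cells of the newly added points: a site placed on an edge-arc has a $P$-cell consisting of all points closer to it than to every vertex, and this cell straddles the old bisector between the two adjacent cross-polytope cells, so it is contained in no single $P_0$-cell. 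The bound you want is nevertheless true and cheap to recover: every $P$-cell lies in the ball of radius $d_\HH(S^n,P)\le d_\HH(S^n,P_0)=\arccos\bigl(1/\sqrt{n+1}\bigr)$ about its site (the background fact $\vdiam\le 2\cdott d_\HH$), and $2\arccos\bigl(1/\sqrt{n+1}\bigr)=\arccos\bigl(\tfrac{-(n-1)}{n+1}\bigr)$, which you then compare to $\tfrac{\pi n}{n+1}$ exactly as you do (\Cref{lem:calc} for $n\ge 3$, directly for $n=2$). The paper is similarly brief here (``adding further points can only improve this inequality''), but it does not assert the false cell-containment statement.

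Second, your separation case analysis rests on two incorrect structural claims. Cross-polytope vertices are not mutually orthogonal ($e_j$ and $-e_j$ are antipodal), so two arcs sharing the endpoint $e_i$ can be coplanar (far endpoints $e_j$ and $-e_j$); there $\langle u_1,u_2\rangle=-1$, the inner product of the two points is $\cos(t+t')$, and the distance is $t+t'$, at least twice your bound---fine, but not by your computation. Likewise, ``arcs with no common endpoint either lie in orthogonal coordinate planes or form an antipodal pair'' is false: the arc from $e_1$ to $e_2$ and the arc from $-e_1$ to $e_3$ share no endpoint, lie in non-orthogonal planes, and are not antipodes of one another; here the inner product is $-\cos t\cos s\le 0$, so the distance is at least $\pi/2$. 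Every omitted configuration does satisfy the bound, and your $\cos^2\alpha\le\cos\alpha$ observation is indeed the decisive ingredient in the one genuinely tight case, so the argument is repairable; but the enumeration as written has holes. The paper's dichotomy avoids them: either both points lie in a common coordinate great circle through $\{\pm e_i,\pm e_j\}$ (handled by the even spacing), or their supports agree in at most one index, so $\langle p,p'\rangle$ is nonpositive or at most $c^2$ with $c$ the largest coordinate of an interior point---the same squared-cosine trick applied uniformly.
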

\begin{proof}
To start, set $P = \{\pm e_1,\ldots, \pm e_{n+1}\}\subseteq S^n$. 
By \Cref{lem:crosspolytope} we have $\vdiam(P)\le \frac{\pi n}{n+1}$, and adding further points can only improve this inequality.
We must add $2(k+1)-2(n+1) = 2(k-n)$ points to $P$. 
Consider the $2n(n+1)$-many geodesic arcs in $S^n$ between pairs of points $\{\pm e_i, \pm e_j\}$, where $i\neq j$. 
Split these arcs into antipodal pairs, choosing one element of each pair as the ``positive" copy of the arc.
Set $N\eqdef \left\lceil \frac{k-n}{n(n+1)}\right\rceil$. 
Into the interior of each of the $n(n+1)$-many positive arcs, place up to $N$-many points, evenly spaced with distances between consecutive points (including the endpoints of the arc) at least $\frac{\pi}{2(N+1)}$.
Copy the points from the positive arcs antipodally to the negative arcs.
Collecting everything, we have up to $2(k+1)$ points in $P$, as shown in \Cref{fig:arcs}.
\begin{figure}[h]
    \[
    \includegraphics{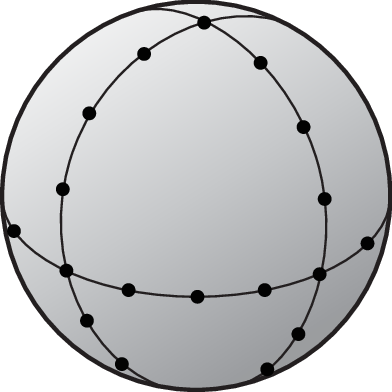}
    \]
    \caption{$N = 3$ points along geodesic arcs between non-antipodal vertices of the octahedron, as in the proof of \Cref{lem:arcs}.}
    \label{fig:arcs}
\end{figure}

Now, we claim that $\sep(P) \ge \frac{\pi}{2(N+1)}$. 
By construction, this bound holds when we restrict our attention to any copy of $S^1$ containing $\{\pm e_i, \pm e_j\}$.
The only other case to worry about is the distance between $p,p'\in P$, where $p$ and $p'$ lie in the interiors of arcs that are not coplanar. 
In particular, the nonzero coordinates in $p$ and $p'$ do not occur in the same indices.
As each point in $P$ has at most two nonzero coordinates, we either have $\langle p, p'\rangle = 0$ or $\langle p, p'\rangle \le c^2$ where $c$ is the maximum absolute value among the coordinates of all points in $P$ that lie on the interiors of arcs.
This implies that $d_{S^n}(p,p')$ is at least as large as the distance from $p$ (respectively, $p'$) to the nearest $\pm e_i$. 
This proves that $\sep(P) \ge \frac{\pi}{2(N+1)}$ as desired. 
The final inequality follows from the fact that $n\ge 2$, so $2\left\lceil \frac{k-n}{n(n+1)}\right\rceil \le k-n+1$. 
\end{proof}

With this construction, we can now recall and prove \Cref{thm:ntok}.

\ntok*
\begin{proof}
The case $n=1$ follows from \Cref{thm:1tok-easy}.
For $n\ge 2$, let $P\subseteq S^n$ be the antipodal set of $2(k+1)$ points of \Cref{lem:arcs}, and let $Q\subseteq S^k$ be the standard basis vectors and their negatives. 
The correspondence $R_{P,Q}$ of \Cref{thm:induced-correspondence} will have distortion at most the maximum of $\vdiam(P)$, $\pi-\sep(P)$, $\vdiam(Q)$, and $\pi-\sep(Q)$.
\Cref{lem:arcs} guarantees that $\vdiam_{S^n}(P)\le \frac{\pi n}{n+1} < \frac{\pi k}{k+1}$, and $\pi-\sep_{S^n}(P)$ is bounded above by $\pi-\frac{\pi}{k-n+3} = \frac{\pi(k-n+2)}{k-n+3}\le \frac{k\pi}{k+1}$. 
\Cref{lem:crosspolytope} guarantees that $\vdiam_{S^k}(Q)$ and $\pi-\sep_{S^k}(Q)$  are bounded above by $\frac{\pi k}{k+1}$. 
The result follows.
\end{proof}

The proof of \Cref{thm:packing} likewise proceeds by choosing appropriate antipodal sets and applying \Cref{thm:induced-correspondence}. 
As before, in the higher dimensional sphere we use the vertices of a cross-polytope.
In the lower dimensional sphere, we simply use an optimal packing of antipodal points. 

\packing*
\begin{proof}
Let $P\subseteq S^n$ be an antipodal set of $2(k+1)$ points whose image $\overline P$ in $\RP^n$ is an optimal packing, with as few points as possible at pairwise distance exactly $p_{k+1}(\RP^n)$. 
Let $Q\subseteq S^k$ be the vertices of a cross-polytope.
Applying \Cref{thm:induced-correspondence}, it will suffice to give appropriate upper bounds on $\vdiam(P)$, $\pi-\sep(P)$, $\vdiam(Q)$, and $\pi-\sep(Q)$. 
The latter two quantities are bounded above by $\arccos\left(\frac{-(k-1)}{k+1}\right)$ by \Cref{lem:crosspolytope}, and so it remains to consider the first two quantities.

The separation of $P$ in $S^n$ is exactly equal to $p_{k+1}(\RP^n)$.
Indeed, the quotient map $S^n \to \RP^n$ exactly preserves the distance between any two points that are within a distance of $\pi/2$ from one another.
As $S^n$ is antipodal, the closest pair of points in $S^n$ has distance at most $\pi/2$ between them.

The Voronoi diameter of $P$ is at most $2d_\HH(P, S^n)$, and we further claim that $d_\HH(P, S^n) \le p_{k+1}(\RP^n)$.
First note that because $P$ is antipodal, $d_\HH(P, S^n)$ is equal to $d_\HH(\overline P, \RP^n)$.
Since $\overline P$ has as few pairs of points as possible at distance exactly $p_{k+1}(\RP^n)$, we see that $d_\HH(\overline P, \RP^n)\le p_{k+1}(\RP^n)$; otherwise we could delete a point in $\overline P$ with distance exactly $p_{k+1}(\RP^n)$ to another point in $\overline P$ and replace it by a point at a strictly larger distance to all other points in $\overline P$.
This proves the result.
\end{proof}

\asymptoticcor*
\begin{proof}
By \Cref{thm:packing}, we see that \begin{align*}
\pi - 2\cdott d_{\gh}(S^n,S^k) &\ge \pi - \max \left \{\arccos\left(\frac{-(k-1)}{k+1}\right) ,\,\, \pi - p_{k+1}(\RP^n),\,\, 2p_{k+1}(\RP^n) \right\}\\
& = \min \left\{\arccos\left(\frac{k-1}{k+1}\right), p_{k+1}(\RP^n), \pi - 2p_{k+1}(\RP^n)\right\}.
\end{align*}
It thus suffices to bound the three quantities above.
Using the bound $\arccos(x) \ge \sqrt{2-2x}$, we see that $\arccos\left(\frac{k-1}{k+1}\right)$ is at least $ \frac{2}{\sqrt{k+1}}$.
The quantity $\pi - 2p_{k+1}(\RP^n)$ tends to $\pi$ as $k\to\infty$, and so can be safely ignored.
Lastly, the fact that $p_{k+1}(\RP^n)$ is bounded below by a multiple of $\frac{1}{\sqrt[n]{k}} \ge \frac{1}{\sqrt{k}}$ completes the proof.
\end{proof}

\begin{remark}
For any $m$, one can chose optimal packings of $m$ points in $\RP^n$ and $\RP^k$, and a similar argument to the one above shows that \[
2\cdott d_\gh(S^n, S^k) \le \max \{\pi-p_m(\RP^n),2p_m(\RP^k)\}. 
\]
Choosing $m = k+1$, we note that $2p_{k+1}(\RP^k) = \pi$, and so this formulation does not generalize or improve on \Cref{thm:packing}.
We are also not aware of examples where choosing some $m > k+1$ improves on the bounds in our previously stated theorems via this formulation.
\end{remark}

\section{Conclusion}\label{sec:conclusion}

We have given the first effective upper bounds on $2\cdott d_\gh(S^n,S^k)$ that apply to all possible $n$ and $k$.
We determined the Gromov--Hausdorff distance $2d_\gh(S^1, S^k)$ exactly, and characterized the asymptotic behavior of $\pi-2\cdott d_\gh(S^2,S^k)$.
For general $n$ and $k$ it is likely that our upper bounds are not tight. 
A natural next step would be to try and determine the exact asymptotics of $\pi - 2\cdott d_\gh(S^n,S^k)$ for fixed $n\ge 3$.
\asymptoticconj*

Another natural next step would be to consider asymptotics for spheres with a fixed gap in dimension.
\begin{question}\label{q:codim}
    Fix $m\ge 1$.
    What is the asymptotic behavior of the quantity\[
    \pi - 2\cdott d_{\gh}(S^n, S^{n+m})
    \]
    as $n\to \infty$? 
    What is the limit of this quantity as $n\to \infty$?
\end{question}
\Cref{thm:ntok} gives a bound on this quantity that tends to zero, but for $m=1$ it is known that the true value of the limit is at least $\tfrac{\pi}{3}$ (see \cite[Theorem 1.2]{dghpolymath}).
Thus a full answer to \Cref{q:codim} will likely require new ideas.

The present paper was motivated our collaborative work in \cite{dghpolymath}, in which the geodesic metric played a primary role, and so we have restricted our attention to this setting.
Lim, M\'emoli, and Smith~\cite[Corollary 9.8]{LMS22} showed that $d_\gh(S^n_E, S^k_E) \le \sin(d_\gh(S^n, S^k))$ where the subscript $E$ denotes the Euclidean metric.
Hence we have the following corollaries of \Cref{thm:1tok} and \Cref{thm:ntok}.
\begin{corollary}
 Let $\ell\ge 1$ be any integer. Then \begin{align*}
     d_\gh(S^1_E, S^{2\ell}_E) &\le \sin\left(\frac{2\pi \ell}{2(2\ell +1)}\right)\quad \text{and}\\
        d_\gh(S^1_E, S^{2\ell + 1}_E) &\le \sin\left(\frac{2\pi \ell}{2(2\ell +1)}\right). 
 \end{align*}   
\end{corollary}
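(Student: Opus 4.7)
The plan is to derive this corollary as a direct substitution, using two inputs: Theorem~\ref{thm:1tok} above, which pins down $2\cdott d_\gh(S^1, S^{2\ell})$ and $2\cdott d_\gh(S^1, S^{2\ell+1})$ exactly, and the Lim--M\'emoli--Smith comparison $d_\gh(S^n_E, S^k_E) \le \sin(d_\gh(S^n, S^k))$ cited just above the statement. Since both ingredients are already in hand, there is essentially no new content beyond careful bookkeeping.

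First I would extract the geodesic distances: Theorem~\ref{thm:1tok} gives $2\cdott d_\gh(S^1, S^{2\ell}) = 2\cdott d_\gh(S^1, S^{2\ell+1}) = \frac{2\pi \ell}{2\ell+1}$, so dividing by $2$ yields
\[
d_\gh(S^1, S^{2\ell}) = d_\gh(S^1, S^{2\ell+1}) = \frac{\pi \ell}{2\ell+1} = \frac{2\pi\ell}{2(2\ell+1)}.
\]
Note that this value lies in $[0, \pi/2)$, so the sine function is monotone increasing on the relevant range, which will matter for interpreting the inequality.

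Next I would apply the cited corollary from~\cite{LMS22} with $n=1$ and $k \in \{2\ell, 2\ell+1\}$. Substituting the value computed above gives
\[
d_\gh(S^1_E, S^{2\ell}_E) \ \le\ \sin\!\left(\frac{2\pi\ell}{2(2\ell+1)}\right) \qquad\text{and}\qquad d_\gh(S^1_E, S^{2\ell+1}_E) \ \le\ \sin\!\left(\frac{2\pi\ell}{2(2\ell+1)}\right),
\]
which is exactly the claimed pair of inequalities. There is no real obstacle here; the only subtlety worth flagging is that the halving of $2\cdott d_\gh$ and the particular way the fraction is written in the statement match up correctly, and that the bound from~\cite{LMS22} applies in the required direction (upper bound on Euclidean distance from geodesic distance). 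No new construction or estimate is required beyond citing Theorem~\ref{thm:1tok} and~\cite[Corollary~9.8]{LMS22}.
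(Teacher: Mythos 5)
Your proposal is correct and matches the paper exactly: the corollary is stated there as an immediate consequence of Theorem~\ref{thm:1tok} combined with the Lim--M\'emoli--Smith bound $d_\gh(S^n_E, S^k_E) \le \sin(d_\gh(S^n, S^k))$, with no further argument given. Your remark about monotonicity of sine is harmless but unnecessary, since Theorem~\ref{thm:1tok} supplies the exact value to substitute.
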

\begin{corollary}
    For every $1\le n < k < \infty$, we have \[
    d_\gh(S^n_E, S^k_E) \le \sin \left(\frac{\pi k}{2(k+1)}\right). 
    \]
\end{corollary}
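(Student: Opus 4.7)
The plan is to combine Theorem \ref{thm:ntok} with the result of Lim, M\'emoli, and Smith cited just before the corollary in a direct way. By Theorem \ref{thm:ntok}, we have $2\cdott d_\gh(S^n, S^k) \le \frac{\pi k}{k+1}$ for all $1 \le n < k < \infty$, which upon dividing by $2$ gives the bound $d_\gh(S^n, S^k) \le \frac{\pi k}{2(k+1)}$ on the geodesic Gromov--Hausdorff distance.

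Next, I would invoke \cite[Corollary 9.8]{LMS22}, which compares the geodesic and Euclidean Gromov--Hausdorff distances: $d_\gh(S^n_E, S^k_E) \le \sin(d_\gh(S^n, S^k))$. To apply monotonicity of the sine function, I would observe that $\frac{\pi k}{2(k+1)} < \frac{\pi}{2}$ since $\frac{k}{k+1} < 1$, so both $d_\gh(S^n, S^k)$ and the upper bound $\frac{\pi k}{2(k+1)}$ lie in the interval $[0, \frac{\pi}{2}]$ on which $\sin$ is strictly increasing. Chaining these observations yields
\[
d_\gh(S^n_E, S^k_E) \le \sin(d_\gh(S^n, S^k)) \le \sin\left(\frac{\pi k}{2(k+1)}\right),
\]
which is the claimed bound.

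This is essentially a routine combination rather than a deep step, so there is no real obstacle --- the only subtlety is verifying that the argument to $\sin$ stays in the range where monotonicity applies, which is immediate from the form of the bound $\frac{\pi k}{2(k+1)}$.
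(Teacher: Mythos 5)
Your proposal is correct and is exactly the argument the paper intends: the corollary is stated as an immediate consequence of \Cref{thm:ntok} together with \cite[Corollary 9.8]{LMS22}, and your monotonicity check that $\frac{\pi k}{2(k+1)} < \frac{\pi}{2}$ is the only (routine) detail needed.
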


\begin{question}
Are the corollaries above tight in any cases?
What is the distortion of the correspondences we have constructed when considered with the Euclidean metric?
\end{question}

To obtain the best possible results from \Cref{thm:induced-correspondence}, we must construct antipodal point sets in $S^n$ and $S^k$ that have large separation and small Voronoi diameter.
If we only optimize the separation, this is the problem of finding good packings in $\RP^n$ and $\RP^k$ (i.e. projective codes).
If we only optimize the Voronoi diameter, this is the problem of finding a good cover of $\RP^n$ by metric balls. 
Each of these problems has been extensively studied, but as are not aware of any work that seeks to balance both conditions simultaneously.

\begin{question}
Which antipodal point sets $P\subseteq S^n$ minimize the quantity $\max\{\pi-\sep(P), \vdiam(P)\}$?
\end{question}

\section*{Acknowledgements}
We are grateful to Henry Adams, Boris Bukh, Florian Frick, Sunhyuk Lim, and Facundo M{\' e}moli for helpful discussions and feedback.
We are also grateful to all our collaborators in the Gromov--Hausdorff, Borsuk--Ulam, Vietoris--Rips polymath project \cite{dghpolymath} which initiated our work on this project. 

\bibliographystyle{plain}
\bibliography{dgh}

\end{document}